\newcommand{\C}{\mathbb{C}}
\newcommand{\F}{\mathbb{F}}
\newcommand{\N}{\mathbb{N}}
\renewcommand{\P}{\mathbb{P}}
\newcommand{\RR}{\mathbb{R}}
\newcommand{\Z}{\mathbb{Z}}
\newcommand{\comment}[1]{}
\theoremstyle{definition}
\newtheorem{theorem}{Theorem}[section]
\theoremstyle{definition}
\numberwithin{equation}{subsection}
\theoremstyle{plain}
\newtheorem{thm}{Theorem}[section]
\newtheorem{prop}[theorem]{Proposition}
\newtheorem{cor}[theorem]{Corollary}
\newtheorem{lem}[theorem]{Lemma}
\theoremstyle{definition}
\newtheorem{rem}[theorem]{Remark}
\newtheorem{exm}[theorem]{Example}
\newtheorem*{theorem*}{Theorem}
\newtheorem*{lem*}{Lemma}
\newtheorem*{con*}{Conjecture}
\newtheorem{Claim*}{Claim}
\newtheorem*{defn*}{Definition}
\newtheorem*{rem*}{Remark}
\def\imod#1{\allowbreak\mkern10mu\left({\operator@font mod}\,\,#1\right)}
\begin{document}

\title[Uniform Symbolic Topologies via 
Multinomial Expansions]{Uniform Symbolic Topologies via 
Multinomial Expansions}
\author{Robert M. Walker}

\address{Department of Mathematics, University of Michigan, Ann Arbor, MI, 48109}
\email{robmarsw@umich.edu}

\parskip=10pt plus 2pt minus 2pt

\begin{abstract} When does a Noetherian commutative ring $R$ have uniform symbolic topologies on primes--read, when does there exist an integer $D>0$ such that the symbolic power $P^{(Dr)} \subseteq P^r$ for all prime ideals $P \subseteq R$ and all $r >0$? Groundbreaking work of Ein-Lazarsfeld-Smith, as extended by Hochster and Huneke, and by Ma and Schwede in turn, provides a beautiful answer in the setting of finite-dimensional excellent regular rings. It is natural to then sleuth for analogues where the ring $R$ is non-regular, or where the above ideal containments can be improved using a linear function whose growth rate is slower. This manuscript falls under the overlap of these research directions.    
 %Over an arbitrary field $\F$, Harbourne (Conj. 8.4.2 of \cite{Primer}) conjectured that the symbolic power $I^{(N (r-1)+1)} \subseteq I^r$ for all $r>0$ and all graded ideals $I$ in $S = \F [\P^N] = \F[x_0, \ldots, x_N]$ ($N \ge 2$). The conjecture has been disproven in both zero- and odd prime characteristic  (\cite{DSTG01,resurge2}, see also \cite{Akes01}). However, the conjecture does hold over any field when, e.g., $I$ is a monomial ideal in $S$. 
%This manuscript is a sequel to the proof of Theorems 1.1 and 3.1 in \cite{Walker002}. 
Working with a prescribed type of prime ideal $Q$ inside of  tensor products of domains of finite type over an algebraically closed field $\F$, we present binomial- and multinomial expansion criteria for containments of type $Q^{(E r)} \subseteq Q^r$, or even better, of type $Q^{(E (r-1)+1)} \subseteq Q^r$ for all $r>0$. The final section consolidates remarks on how often we can  utilize these criteria, presenting an example. %In the closing section, we indicate how to specify explicit multipliers $E$, when such are known.
 \end{abstract}

\thanks{2010 \textit{Mathematics Subject Classification:} 13H10, 14C20, 14M25.
}
\thanks{\textit{Keywords:} multinomial theorem, non-isolated singularities, symbolic powers, toric variety.}
\maketitle

%\tableofcontents

\section{Introduction and Conventions for the Paper}

Given a Noetherian commutative ring $R$, when is there an integer $D$, depending only on $R$, such that the symbolic power $P^{(D r)} \subseteq P^r$ for all prime ideals $P \subseteq R$ 
 and all positive integers $r$? %\footnote{For a definition of symbolic powers of prime and radical ideals, see \cite{5authorSymbolicSurvey}.} % To clarify, if $P$ is any prime ideal in a Noetherian ring $R$, its \textbf{$a$-th ($a \in \Z_{>0}$) symbolic power} ideal is the smallest $P$-primary ideal containing $P^a$: $P^{(a)} = P^a R_P \cap R := \left\lbrace f \in R \colon uf \in P^a \mbox{ for some }u \in R - P  \right\rbrace .$  %the $P$-primary component in any Lasker-Noether minimal primary decomposition of $P^a$; it is We set $P^{(0)} = P^0 = R$.  
%While $P^{(1)} = P$, the inclusion $P^{(a)} \supseteq P^a$ for $a>1$ is typically strict. %When $R$ is toric and $P$ is monomial, it suffices to work with monomials $f \in R$ and  $u \in R-P$. 
  In short, when does $R$ have \textbf{uniform symbolic topologies} on primes \cite[Section 3]{5authorSymbolicSurvey} \cite{HKV2}? %Moreover, when can we effectively compute the multiplier $D$ in terms of simple data about $R$?   %In this paper, we answer this last question in the setting of torus-invariant primes in a normal toric (or semigroup) algebra. 

The Ein-Lazarsfeld-Smith Theorem  \cite{ELS}, as extended by Hochster and Huneke \cite{HH1}, says that if $R$ is a $d$-dimensional regular ring containing a field,  then $Q^{(Dr)} \subseteq Q^r$ for all radical ideals $Q \subseteq R$ and all $r > 0$, where $D = \max \{1 , d-1 \}$.\footnote{This result has been extended to all excellent regular rings, even in mixed characteristic, by Ma-Schwede \cite{MaSchwede17}.} To what extent does this theme ring true for non-regular rings? Under mild stipulations, a local domain $R$ regular on the punctured spectrum has uniform symbolic topologies on primes \cite[Cor.~3.10]{HKV}. %, although explicit values for $D$ remain elusive. 
This paper makes first strides in establishing affirmative cases of the above questions for rings with non-isolated singularities. We postpone sojourning into that wilderness until  Section \ref{section: Finale 1}, once we have proven our main result--Theorem \ref{thm:main000}.

We now revisit the regular setting. Over an arbitrary field $\F$, $S = \F [\mathbb{P}^N] = \F[x_0, x_1, \ldots, x_N]$ is a standard $\N$-graded polynomial ring. The groundbreaking work of Ein-Lazarsfeld-Smith and Hochster-Huneke \cite{ELS, HH1}  implies that the symbolic power $I^{(Nr)} \subseteq I^r$ for all graded ideals 
$0 \subsetneqq I \subsetneqq S$ and all integers $r > 0$. In particular, $I^{(4)} \subseteq I^2$ holds for all graded ideals in $\F[\P^2]$, and Huneke asked whether an improvement $I^{(3)} \subseteq I^2$ holds for any radical ideal $I$ defining a finite set of points in $\P^2$. 
%Using graded ideals of \textit{star configurations}  in $\P^N$, Bocci and Harbourne (\cite{BH1}) showed that in securing these containments one cannot replace $N$ by some integer $0 < C < N$. 
%As part of an active area of research into symbolic powers in $S$, \textbf{ideal containment problems} concern the determination of when select families of ideal containments hold for a graded ideal $0 \subsetneqq I \subsetneqq S$, e.g., those of the type $I^{(m)} \subseteq I^r$. 
Building on this, Harbourne proposed dropping the symbolic power from $Nr$ down to the \textbf{Harbourne-Huneke bound} $Nr - (N-1) = N (r-1)+1$ when $N \ge 2$ \cite[Conj. 8.4.2]{Primer}: i.e.,  %(the $N=1$ case is simply the Ein-Lazarsfeld-Smith containment)%the family 
\begin{equation}\label{Harbourne-Huneke bound 001}
I^{(N (r-1) +1)} \subseteq I^r \mbox{ for any graded ideal $0 \subsetneqq I \subsetneqq S$, all }r > 0, \mbox{ and all }N \ge 2.
\end{equation} 
There are several scenarios where these improved containments hold: for instance, they hold for all monomial ideals in $S$ over any field \cite[Ex.~8.4.5]{Primer}; see also recent work of Grifo-Huneke \cite{GrifoHun00}. 

%However, as was noted in the intro to (\cite{Walker002}), \eqref{Harbourne-Huneke bound 001} is not kismet: 
However, Dumnicki, Szemberg, and Tutaj-Gasi\'{n}ska showed in characteristic zero \cite{DSTG01} that the containment $I^{(3)} \subseteq I^2$ can fail for a radical ideal defining a point configuration in $\P^2$.  
Harbourne-Seceleanu showed in odd positive characteristic \cite{resurge2} that \eqref{Harbourne-Huneke bound 001} can fail for pairs $(N, r) \neq (2, 2)$ and ideals $I$ defining a point configuration in $\P^N$. 
 Akesseh \cite{Akes01} cooks up many new counterexamples to  \eqref{Harbourne-Huneke bound 001}  from these original constructions. 
No prime ideal counterexample has been found. 
%\noindent Tighter containments of the type $I^{(E(r-1)+1)} \subseteq I^r$, as in Theorems \ref{thm:ToricUSTPMonoPrimes001} and \ref{thm:VarPi-FSig-Bound01}  and first promoted by Harbourne, hold for  all monomial ideals in an affine polynomial ring over any field \cite[Ex.~8.4.5]{Primer}; see also recent work of Grifo-Huneke \cite{GrifoHun00}. 

Our goal is to establish Harbourne-Huneke bounds on the growth of symbolic powers of certain primes in non-regular Noetherian rings containing a field. This project first began with Theorems 1.1 and 3.1 in \cite{Walker002}: to clarify, normal affine semigroup rings 
%, also called monomial rings or toric rings,
 are domains generated by Laurent monomials that arise as the coordinate rings of %simplicial 
  normal affine toric varieties \cite{torictome,introtoric}.
\begin{thm}[{\cite[Thm.~1.1]{Walker002}}]\label{thm: finite tensor products 000}
\textit{Let $R_1, \ldots, R_n$ be normal 
affine semigroup rings over a field $\F$, built, respectively, from full-dimensional strongly convex rational polyhedral cones $\sigma_i \subseteq \RR^{m_i}$, $1 \le i \le n$. % $P_i \subseteq R_i$ monomial primes with $1 \le  i \le n$. Let
For each $1 \le i \le n$,  suppose there is an integer $D_i>0$ such that $P^{(D_i (r-1) + 1)} \subseteq P^r$ %and $(P R)^{(D_i (r-1) + 1)} \subseteq (P R)^r$ 
for all $r> 0$ and all monomial primes $P \subseteq R_i$. Set $D := \max\{D_1, \ldots, D_n \}$. Then $Q^{(D (r-1) + 1)} \subseteq Q^r$ for all $r>0$ and any monomial prime $Q$ in the normal 
affine semigroup ring %normal toric ring
 $R = R_1 \otimes_\F \cdots \otimes_\F R_n$. 
%$D = D_1 + \cdots + D_n$. 
 % If in fact for each $1 \le i \le n$ there is an integer $D_i>0$ such that $P^{(N)} \subseteq P^{\lceil N/D_i\rceil}$ %and $(P R)^{(D_i (r-1) + 1)} \subseteq (P R)^r$ 
%for all $N> 0$ and all monomial primes $P \subseteq R_i$. Then $Q^{(N)} \subseteq Q^{\lceil N/ D \rceil}$ for all $N>0$, where $D = \max\{D_1, \ldots, D_n \}$.
}  
\end{thm}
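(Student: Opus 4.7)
The plan is to exploit the product structure of both $R$ and its monomial primes, prove a multinomial decomposition of $Q^{(k)}$, and close with a ceiling-pigeonhole argument. Since $R = R_1 \otimes_\F \cdots \otimes_\F R_n$ is itself a normal affine semigroup ring on the product cone $\sigma_1 \times \cdots \times \sigma_n$, and every face of such a product factors as $\tau = \tau_1 \times \cdots \times \tau_n$, each monomial prime $Q \subseteq R$ admits a decomposition
$$Q \;=\; P_1 R + P_2 R + \cdots + P_n R,$$
where $P_i \subseteq R_i$ is a monomial prime (possibly zero), and a monomial $m_1 \cdots m_n \in R$ (with $m_i \in R_i$) lies in $Q$ iff $m_i \in P_i$ for some $i$.

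The technical heart of the argument is the multinomial expansion
\begin{equation}\label{eq:msum}
Q^{(k)} \;\subseteq\; \sum_{\substack{k_1+\cdots+k_n = k \\ k_i \geq 0}} P_1^{(k_1)} P_2^{(k_2)} \cdots P_n^{(k_n)} \cdot R,
\end{equation}
with the convention $P_i^{(0)} := R_i$ and all products taken inside $R$. Since $Q^{(k)}$ is a monomial ideal it suffices to handle a monomial $f = m_1 \cdots m_n \in Q^{(k)}$. Pick a monomial $g \in R \setminus Q$ with $gf \in Q^k$; by the characterization of monomials outside $Q$, one can take $g = g_1 \cdots g_n$ with $g_i \in R_i \setminus P_i$. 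Multinomially expanding $Q^k = (P_1 R + \cdots + P_n R)^k$ places $gf$ in some summand $P_1^{k_1} \cdots P_n^{k_n} \cdot R$ with $\sum k_i = k$, and matching monomial factors across tensor components forces $g_i m_i \in P_i^{k_i}$, whence $m_i \in P_i^{(k_i)}$ for each $i$.

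With \eqref{eq:msum} in hand, set $k = D(r-1)+1$ and fix a summand with $\sum_i k_i = k$. Define $r_i := \lceil k_i/D \rceil$, so that $D(r_i-1)+1 \leq k_i$. When $r_i \geq 1$, combining $D \geq D_i$ with the hypothesis on $R_i$ gives
$$P_i^{(k_i)} \;\subseteq\; P_i^{(D(r_i-1)+1)} \;\subseteq\; P_i^{(D_i(r_i-1)+1)} \;\subseteq\; P_i^{r_i},$$
while $r_i = 0$ forces $k_i = 0$ and $P_i^{(0)} = R_i$. Subadditivity of the ceiling gives
$$\sum_i r_i \;=\; \sum_i \lceil k_i/D \rceil \;\geq\; \Bigl\lceil \tfrac{D(r-1)+1}{D} \Bigr\rceil \;=\; r,$$
so the summand lies in $P_1^{r_1} \cdots P_n^{r_n} \cdot R \subseteq Q^{\sum_i r_i} \subseteq Q^r$ and the theorem follows.

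The main conceptual obstacle is formulating \eqref{eq:msum} correctly. The familiar ``intersection of facet-symbolic powers'' description of $Q^{(k)}$ that works for facet primes already gives too small an ideal in simple toric examples (e.g., the maximal monomial prime of a polynomial ring), so the correct decomposition really must record the partition of symbolic order across tensor factors rather than insisting on uniform vanishing at every facet of $\sigma$ containing $\tau$. Once the combinatorial framework is in place the downstream pigeonhole is essentially routine, and this is presumably what the title's ``multinomial expansions'' alludes to.
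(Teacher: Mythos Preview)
Your proposal is correct and follows the same three-ingredient strategy the paper describes immediately after the theorem statement: the face-of-a-product decomposition $Q=\sum_i P_iR$, a multinomial expansion of $Q^{(k)}$ in terms of the $P_i^{(k_i)}$, and the ceiling-pigeonhole close. Your closing argument is verbatim the one the paper gives for Corollary~\ref{cor:harbhun001}(2) via Lemma~\ref{lem: equivalence of symb power containments 001}.

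The one point of genuine difference is \emph{how} the multinomial inclusion \eqref{eq:msum} is obtained. The paper (here, for the more general Theorem~\ref{thm:main000}) proves equality in Theorems~\ref{thm:binomialexpansion001}--\ref{thm:multinomial001} by a filtration/associated-primes argument showing $J_N=\sum_{\sum A_i=N}\prod (P_i')^{(A_i)}$ is $(P+Q)$-primary. You instead give a direct monomial argument specific to the toric setting: reduce to a monomial $f\in Q^{(k)}$, pick a monomial witness $g\notin Q$ with $gf\in Q^k$ (legitimate since $Q$, $Q^k$, and $Q^{(k)}$ are torus-invariant, so the witness can be taken monomial and factors as $g_1\cdots g_n$ with $g_i\notin P_i$), and then use that a monomial lying in a sum of monomial ideals lies in one summand to read off $g_im_i\in P_i^{k_i}$ componentwise. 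This is more elementary and only yields the inclusion $\subseteq$, but that inclusion is all the application needs; the paper's route buys the full equality and works outside the toric world. Both reach the same endgame.
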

\noindent To prove Theorem \ref{thm: finite tensor products 000}, we needed to know first that monomial primes in $R_i$ expand to monomial primes in $R$, that any $Q$ as above can be expressed as a sum $Q = \sum_{i=1}^n P_i R$ where each $P_i \subseteq R_i$ is a monomial prime, and that the symbolic powers of $Q$ admit a multinomial expansion in terms of symbolic powers of the $P_i R$. These ideas will resurge below, but in a more general setup.  

One drawback of Theorem \ref{thm: finite tensor products 000} is that it only covers a finite collection of prime ideals. What follows is the main result of this paper, a more powerful variant of Theorem \ref{thm: finite tensor products 000}  that will typically cover infinitely-many primes inside of tensor product domains; see Remark \ref{rem:infinite-spectrum} for details. %, as long as some tensor factor has dimension at least two: 
% Our goal in this paper is to similarly employ binomial- and multinomial expansions for symbolic powers of prime ideals in a more general setting, in order to lift uniform linear bounds on the asymptotic growth of symbolic powers.  In particular, we first deduce the following criterion,  extending it via induction to a form (Corollary \eqref{cor:main001}) covering any finite number of tensor factors:

\begin{thm}\label{thm:main000}
\textit{Let $\F$ be an algebraically closed field. Let $R_1, \ldots, R_n$ $(n \ge 2)$ be affine commutative $\F$-algebras which are domains. Suppose that for each $1 \le i \le n$, there exists a  positive integer $D_i$ such that for all prime ideals $P$ in $R_i$,  either: $(1)$ $P^{(D_i r) } \subseteq P^r$  for all $r>0$ and for all $i$; or, even stronger, $(2)$  $P^{(D_i (r-1) + 1)} \subseteq P^r$ for all $r>0$ and for all $i$. Fix any $n$ prime ideals $P_i$ in $R_i$, and consider the expanded ideals $P_i' = P_iT$ in the affine domain $T = (\bigotimes_{\F})_{i=1}^n R_i$, along with their sum $Q = \sum_{i=1}^n P_i'$ in $T$. 
When $(1)$ holds, $Q^{(D r) } \subseteq Q^r$ for all $r>0$, where $D  = D_1 + 
\cdots + D_n$.  When $(2)$ holds,  this improves to $Q^{(D (r-1)+1) } \subseteq Q^r$ for all $r>0$, where 
$D  = \max \{D_1 , \ldots, D_n \}$.} 
\end{thm}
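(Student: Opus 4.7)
The plan is to mirror the approach of Theorem \ref{thm: finite tensor products 000}, promoting it from monomial primes to arbitrary primes by leveraging that $\F$ is algebraically closed. Three preparatory claims are needed: (a) each $P_i'$ and the sum $Q = \sum_i P_i'$ are prime in $T$; (b) symbolic powers commute with the extension, i.e.\ $(P_i)^{(k)} T = (P_i')^{(k)}$ for all $i$ and $k>0$; and (c) the multinomial-style symbolic expansion
\[
Q^{(N)} \;\subseteq\; \sum_{a_1 + \cdots + a_n = N} (P_1')^{(a_1)} (P_2')^{(a_2)} \cdots (P_n')^{(a_n)}.
\]
Granted (a)--(c), a short pigeonhole argument finishes both cases of the theorem.

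For (a), algebraic closure of $\F$ is essential: a tensor product over an algebraically closed field of affine $\F$-algebras that are domains is again a domain. So $T/P_i' \cong (R_i/P_i) \otimes_\F \bigotimes_{j \ne i} R_j$ and $T/Q \cong \bigotimes_{\F} R_i/P_i$ are domains, forcing $P_i'$ and $Q$ to be prime. For (b), writing $T = R_i \otimes_\F T'$ with $T' := \bigotimes_{j \ne i} R_j$ exhibits $T$ as a free $R_i$-module (using an $\F$-basis of $T'$), and a short verification tracing symbolic powers through this decomposition, together with primality of $P_i'$, yields the claimed equality $(P_i')^{(k)} = P_i^{(k)} \otimes_\F T' = P_i^{(k)} T$.

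Step (c) is the heart of the argument and the main technical obstacle: it is the symbolic analogue of the ordinary multinomial expansion of $Q^N$. My approach would be to localize at $Q$: using (a)--(b) to relate $T_Q$ to the local rings $(R_i)_{P_i}$, the maximal ideal is $QT_Q = \sum_i P_i' T_Q$, so the ordinary multinomial identity gives $(Q T_Q)^N = \sum_{|a|=N} \prod_i (P_i' T_Q)^{a_i}$. Contracting back to $T$ via $Q^{(N)} = Q^N T_Q \cap T$, and recognizing each $(P_i' T_Q)^{a_i}$ as contracting to $(P_i')^{(a_i)}$, yields (c). Unpacking this contraction is where the tensor-product structure over algebraically closed $\F$ is really needed; the argument generalizes the method used for monomial primes in \cite[Thm.~1.1]{Walker002}.

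Finally, the pigeonhole. Under hypothesis (1), given $(a_1,\ldots,a_n)$ with $\sum_i a_i = Dr$ and $D = \sum_i D_i$, set $b_i := \lfloor a_i/D_i \rfloor$. Writing $a_i = D_i b_i + s_i$ with $0 \le s_i \le D_i - 1$ gives $\sum_i D_i b_i \ge Dr - (D-n)$, and the inequality $\sum_i D_i b_i \le D \sum_i b_i$ (since each $D_i \le D$) forces $\sum_i b_i \ge r - 1 + n/D > r - 1$, so $\sum_i b_i \ge r$ by integrality. Hypothesis (1) together with (b) yields $(P_i')^{(a_i)} \subseteq (P_i')^{b_i}$, whence $\prod_i (P_i')^{(a_i)} \subseteq Q^{\sum_i b_i} \subseteq Q^r$; summing over all tuples and invoking (c) completes case (1). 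Case (2) is analogous with $D = \max_i D_i$, $\sum_i a_i = D(r-1)+1$, and $b_i := \lceil a_i/D_i \rceil$: the inequality $a_i \ge D_i(b_i - 1) + 1$ combined with hypothesis (2) gives $(P_i')^{(a_i)} \subseteq (P_i')^{b_i}$, while $\sum_i b_i \ge \sum_i a_i/D = r - 1 + 1/D > r - 1$ forces $\sum_i b_i \ge r$.
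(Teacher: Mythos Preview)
Your scaffolding is right: claims (a) and (b) are exactly what the paper uses (via Proposition~\ref{prop: faithful flatness criterion 001}), and your pigeonhole arguments for both cases are correct---in fact your case~(1) bound is a mild refinement of the paper's one-line observation that some $A_j \ge D_j r$, and your case~(2) matches the paper's use of Lemma~\ref{lem: equivalence of symb power containments 001}.

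The gap is in step~(c). Your plan is to write $(QT_Q)^N = \sum_{|a|=N}\prod_i (P_i'T_Q)^{a_i}$ and then contract to $T$. But contraction along $T \to T_Q$ does \emph{not} distribute over sums of ideals: from $Q^{(N)} = \bigl(\sum_{|a|=N}\prod_i (P_i')^{a_i}T_Q\bigr)\cap T$ you cannot conclude membership in $\sum_{|a|=N}\bigl(\prod_i (P_i')^{a_i}T_Q \cap T\bigr)$, let alone in $J_N := \sum_{|a|=N}\prod_i (P_i')^{(a_i)}$. If you try to push the argument through, you find that $Q^{(N)} \subseteq J_N T_Q \cap T$, which is the $Q$-saturation of $J_N$; so the localization route only reduces (c) to the statement that $J_N$ is already $Q$-primary---and that is precisely the nontrivial content of (c). The monomial case in \cite{Walker002} avoids this because the grading lets one split elements into homogeneous pieces, a maneuver with no analogue here.

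The paper proves (c) (as an equality, Theorem~\ref{thm:multinomial001}) by a completely different mechanism: it shows directly that $J_N$ is $Q$-primary. One filters $T/J_N$ by the chain $J_0 \supseteq J_1 \supseteq \cdots \supseteq J_N$ and identifies each quotient $J_{c-1}/J_c$, as an $\F$-vector space and then as a $T$-module, with $\bigoplus_{a+b=c-1} (P^{(a)}/P^{(a+1)}) \otimes_\F (Q^{(b)}/Q^{(b+1)})$. Each summand is a tensor product of finitely generated torsion-free modules over the domains $R/P$ and $S/Q$, hence torsion-free over $T/Q$ (this is exactly where algebraic closure of $\F$ enters, to guarantee $T/Q$ is a domain). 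Thus every associated prime of $J_{c-1}/J_c$ is $Q$, and the short exact sequences force $\operatorname{Ass}_T(T/J_N)=\{Q\}$. This torsion-free/associated-prime argument is the missing idea in your sketch.
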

\noindent The proof of this theorem leverages a multinomial formula for the symbolic powers of the prime ideal $Q$ in $T$ (Theorem \ref{thm:multinomial001}). 
H\`{a}, Nguyen, Trung, and Trung recently announced a binomial theorem for symbolic powers of ideal sums \cite[Thm.~3.4]{HNTTBinomial}, generalizing \cite[Thm.~7.8]{BCGHJNSVTV-000}, where one takes two arbitrary ideals $I \subseteq A, J \subseteq B$ inside of two Noetherian commutative algebras over a common field $k$, whose tensor product $R = A \otimes_k B$ is Noetherian; see Remark \ref{rem:Bocci-Ha-etal} for details. 
However, we give a proof of the Multinomial Theorem \ref{thm:multinomial001} which is more elementary and self-contained.

\noindent \textbf{Conventions:} All our rings are Noetherian and commutative with identity. %, and are (non-Artinian) Noetherian normal domains, unless stated otherwise.
%In examples considered in Section \eqref{section: Finale 1}, 
Indeed, our rings will typically be \textit{affine} $\F$-algebras, that is, of finite type over a fixed field $\F$ of arbitrary characteristic. %All our algebraic varieties are irreducible; by 
By \textit{algebraic variety}, we will mean an integral scheme of finite type over the field $\F$.  
%When we say $R$ is \textbf{graded local}, we mean that $R = \bigoplus_{d \ge 0} R_d$ can be graded by $\N$, with $R_0$ being a field, and $\mathfrak{m} = \bigoplus_{d > 0} R_d$ the unique homogeneous maximal ideal.  

\noindent \textbf{Acknowledgements:} I thank my thesis adviser Karen E. Smith, while on her sabbatical, and my surrogate adviser, Mel Hochster, for several patient and fruitful discussions during the Fall 2016 semester. I thank Huy T\`{a}i H\`{a} for sharing a preliminary draft of Section 2 of \cite{HNTTBinomial} in November 2016. 
I thank Elo\'{i}sa Grifo Pires,  Daniel Hern\'{a}ndez,  Jack Jeffries, Luis N\'{u}\~{n}ez-Betancourt, and Felipe P\'{e}rez  for reading a draft of the paper.  
I thank an anonymous referee for comments improving exposition in the paper.  I acknowledge support from a NSF GRF (Grant No.  PGF-031543), NSF RTG grant DMS-0943832, and a 2017 Ford Foundation Dissertation Fellowship.  %I was supported by a NSF GRF under Grant Number PGF-031543, and this work was partially supported by the NSF RTG grant 0943832. 

\section{A Multinomial Theorem for Symbolic Powers of Primes}\label{section: SymbMultinomial}

%\section{Symbolic Powers and Faithful Flatness}

If $P$ is any prime ideal in a Noetherian ring $R$, its \textbf{$a$-th ($a \in \Z_{>0}$) symbolic power} ideal
$$P^{(a)} = P^a R_P \cap R = \left\lbrace f \in R \colon uf \in P^a \mbox{ for some }u \in R - P  \right\rbrace$$ is the $P$-primary component in any Lasker-Noether minimal primary decomposition of $P^a$; it is the smallest $P$-primary ideal containing $P^a$. We separately set $P^{(0)} = P^0 = R$ to be the unit ideal.  
Note that $P^{(1)} = P$, while the inclusion $P^{(a)} \supseteq P^a$ for each  $a>1$ can be strict. 
%For any prime ideal $P$ in $R$, $N \in \Z_{\ge 0 }$, all $T \gg 0$ and any $s \not\in P$ belonging to all embedded primes of $P^N$, one has  $$P^{(N)} = P^N :_R (s)^\infty = \bigcup_{j \ge 0} (P^N :_R (s^j)) = P^N :_R (s^T).$$ 
 %At present, we will be interested in symbolic powers of prime ideals and of ideals of \textbf{pure height one} (i.e., all associated primes have height one). 
 %In the next section, we implicitly use the fact that for any prime ideal $P$ in $R$, $P^{(N)} = P^N :_R (s)^\infty = \bigcup_{j \ge 0} (P^N :_R s^j)$ for any $s \not\in P$ belonging to all associated primes across all ordinary powers of $P$.  
Before proceeding, we record a handy asymptotic conversion lemma. % used to prove Theorem \ref{thm: finite tensor products 000} in \cite{Walker002}; together with Proposition \ref{prop: faithful flatness criterion 001}, it will be used to prove Theorem \ref{thm:main000} later.  
% The lemma converts between two versions of asymptotic ideal containments. 

\begin{lem}[Cf., {\cite[Lem.~3.3]{Walker002}}]\label{lem: equivalence of symb power containments 001}
Given a prime ideal $P$ in a Noetherian ring $S$, and $E \in \Z_{> 0 }$, 
$$\mbox{$P^{(N)} \subseteq P^{\lceil N / E \rceil}$ for all $N \ge 0$ } \iff  \mbox{$P^{(E (r-1) + 1)} \subseteq P^r$ for all $r>0$}.$$ 
\end{lem}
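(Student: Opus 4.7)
The plan is to verify both implications by directly manipulating the indices, using only the elementary facts that $\lceil 1/E \rceil = 1$ for $E \in \Z_{>0}$ and that symbolic powers are nested (i.e., $P^{(a)} \subseteq P^{(b)}$ whenever $a \geq b$).

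For the forward direction ($\Rightarrow$), assume $P^{(N)} \subseteq P^{\lceil N/E \rceil}$ for every $N \geq 0$, and fix an arbitrary $r > 0$. I would plug in the specific value $N = E(r-1)+1$ and compute
\[
\left\lceil \frac{E(r-1)+1}{E} \right\rceil \;=\; (r-1) + \left\lceil \frac{1}{E} \right\rceil \;=\; r,
\]
using $E \in \Z_{>0}$. The hypothesis then yields $P^{(E(r-1)+1)} \subseteq P^r$ immediately, as desired.

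For the reverse direction ($\Leftarrow$), assume $P^{(E(r-1)+1)} \subseteq P^r$ for all $r > 0$, and fix an arbitrary $N \geq 0$. The case $N = 0$ is trivial since both ideals equal $R$ by convention. For $N \geq 1$, set $r := \lceil N/E \rceil \geq 1$. By definition of the ceiling, $r - 1 < N/E$, equivalently $E(r-1) + 1 \leq N$. Thus we get the chain
\[
P^{(N)} \;\subseteq\; P^{(E(r-1)+1)} \;\subseteq\; P^r \;=\; P^{\lceil N/E \rceil},
\]
where the first containment uses nestedness of symbolic powers and the second uses the hypothesis.

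There is no real obstacle here beyond the index bookkeeping; the entire content of the lemma is that the two-parameter containment scheme collapses to a single one-parameter scheme under a change of variable $N \leftrightarrow E(r-1)+1$, which is forced precisely because $1$ is the smallest positive integer (so $\lceil N/E \rceil$ agrees with $r$ exactly on the set $\{E(r-1)+1, \ldots, Er\}$). I would present the two directions as short self-contained paragraphs and remark that the case $N = 0$ needs the convention $P^{(0)} = R$ recorded at the start of the section.
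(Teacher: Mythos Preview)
Your proof is correct. The paper itself does not supply a proof of this lemma; it merely records the statement with a citation to \cite[Lem.~3.3]{Walker002}, so there is nothing to compare against here. Your argument is the natural one: specialize $N = E(r-1)+1$ for the forward direction, and for the reverse direction use nestedness of symbolic powers together with the inequality $E(\lceil N/E\rceil - 1) + 1 \le N$.
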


%\section{A Binomial Theorem for Symbolic Powers}

\noindent \textbf{Torsion free modules over Noetherian Domains.} 
A module $M$ over a domain $R$ is \textbf{torsion free} if whenever $rx = 0$ for some $x\in M$ and $r\in R$, then either $r = 0$ or $x=0$. We first record a lemma on torsion free modules to be used both here and in the next subsection (cf., Lemmas 15.6.7-8 from the Stacks Project page \cite{Sta16} on 
 \href{http://stacks.math.columbia.edu/tag/0549}{torsion free modules}):
\begin{lem}\label{lem: characterization-torsionfreeness 00}
Let $R$ be a Noetherian domain. Let $M$ be a nonzero finitely generated R-module. Then the following assertions are equivalent:
\begin{enumerate}
\item M is torsion free;
\item M is a submodule of a finitely generated free module;
\item $(0)$ is the only associated prime of M, i.e., $\operatorname{Ass}_R (M) = \{(0)\}$.
%\item (0) is in the support of M and M has property (S1), and
%\item (0) is in the support of M and M has no embedded associated prime.
\end{enumerate}
\end{lem}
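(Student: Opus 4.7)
The plan is to verify the three equivalences via the cycle $(2) \Rightarrow (1) \Leftrightarrow (3)$ together with the nontrivial implication $(1) \Rightarrow (2)$.

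For $(2) \Rightarrow (1)$, torsion-freeness passes to submodules, and the ambient finitely generated free module $R^n$ is torsion free because $R$ is a domain: if $r \cdot (x_1, \ldots, x_n) = 0$, then each $r x_i = 0$, forcing either $r = 0$ or every $x_i = 0$.

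For $(1) \Leftrightarrow (3)$, I would rely on two standard facts about a finitely generated module $M$ over a Noetherian ring: every associated prime $\mathfrak{p} \in \operatorname{Ass}_R(M)$ arises as $\ann_R(x)$ for some nonzero $x \in M$, and the set of zero-divisors on $M$ coincides with $\bigcup_{\mathfrak{p} \in \operatorname{Ass}_R(M)} \mathfrak{p}$ (a union that is finite and nonempty as soon as $M \neq 0$). If $M$ is torsion free, then $\ann_R(x) = (0)$ for every nonzero $x$, so every associated prime collapses to $(0)$; conversely, if $(0)$ is the only associated prime, the zero-divisors on $M$ reduce to $\{0\}$, and torsion-freeness follows.

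The crux is $(1) \Rightarrow (2)$. The plan is to pass to the generic point: set $K = \operatorname{Frac}(R)$, and note that since $M$ is torsion free the natural map $M \to M_K := M \otimes_R K$ is injective, its kernel being exactly the torsion submodule. Fix a generating set $m_1, \ldots, m_n$ of $M$; its images span the finite-dimensional $K$-vector space $M_K$, and after reordering $m_1, \ldots, m_d$ form a $K$-basis of $M_K$. The $R$-submodule $F = R m_1 + \cdots + R m_d \subseteq M$ is then free of rank $d$, since any nontrivial $R$-linear relation in $F$ would survive after tensoring with $K$ to a forbidden $K$-linear relation among basis vectors. Each remaining generator $m_j$ ($j > d$) satisfies a $K$-linear relation expressing it in terms of $m_1, \ldots, m_d$; clearing denominators produces some $c_j \in R \setminus \{0\}$ with $c_j m_j \in F$. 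Setting $c = \prod_{j > d} c_j$, which is nonzero because $R$ is a domain, one gets $c M \subseteq F$, and multiplication by $c$ is injective on the torsion-free module $M$, yielding the desired embedding $M \hookrightarrow F$ into a finitely generated free module.

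The main obstacle lies in $(1) \Rightarrow (2)$: the other directions are formal once associated primes are at hand, whereas this implication relies essentially on $M$ being finitely generated (so that only finitely many denominators need to be cleared simultaneously) and on $R$ being a domain (so that $M_K$ and the freeness of $F$ are both meaningful).
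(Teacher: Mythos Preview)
Your proof is correct and complete: the cycle $(2)\Rightarrow(1)\Leftrightarrow(3)$ together with the denominator-clearing argument for $(1)\Rightarrow(2)$ is the standard route, and each step is justified. The paper itself does not supply a proof of this lemma; it merely records the statement with a citation to the Stacks Project (Lemmas 15.6.7--8), so there is no in-paper argument to compare against.
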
 
Working over an arbitrary field $\F$, we fix two affine $\F$-algebras $R$ and $S$ which are domains. The tensor product $T = R \otimes_\F S$ will be an affine $\F$-algebra. $T$ is a domain when $\F$ is algebraically closed (Milne  \cite[Prop.~4.15]{JSMilneAG}). %While $T$ need not be a domain when $\F$ fails to be algebraically closed, 
We note that when $R$ and $S$ are duly nice (e.g., polynomial, or normal toric rings more generally), $T$ is a domain over any field. We now record two additional lemmas. 

\begin{lem}\label{lem:tensorproduct-torsionfreeness01}
Suppose that all three of $R$, $S$, and $T = R\otimes_{\mathbb F}S$ are affine domains over a field $\mathbb F$. If $M$ and $N$ are finitely generated torsion free modules over $R$ and $S$, respectively, then $M\otimes_{\mathbb F} N$ is a finitely generated torsion free $T$-module. 
\end{lem}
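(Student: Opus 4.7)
The plan is to use the characterization from Lemma \ref{lem: characterization-torsionfreeness 00}, namely that (over a Noetherian domain) a finitely generated torsion free module is the same thing as a finitely generated submodule of a finitely generated free module. Since $T$ is an affine $\F$-algebra, hence Noetherian, and is a domain by hypothesis, this equivalence applies to $T$-modules.

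First I would use the ``(1)$\Rightarrow$(2)'' direction of Lemma \ref{lem: characterization-torsionfreeness 00} to produce $R$-linear injections $\iota_M\colon M \hookrightarrow R^a$ and $\iota_N\colon N \hookrightarrow S^b$ for some $a, b \ge 1$. Then I would tensor these over $\F$. Because $\F$ is a field, every $\F$-module is free, hence flat, so $-\otimes_{\F}-$ is exact in each argument. Consequently the map
$$\iota_M \otimes_\F \iota_N\colon M \otimes_\F N \hookrightarrow R^a \otimes_\F S^b \cong (R \otimes_\F S)^{ab} = T^{ab}$$
is still injective. The identification on the right is just the standard isomorphism sending $(r_i) \otimes (s_j)$ to the matrix $(r_i \otimes s_j)$, and it is $T$-linear, so $M \otimes_\F N$ sits inside $T^{ab}$ as a $T$-submodule.

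Next, finite generation of $M\otimes_\F N$ as a $T$-module: this is immediate either from Noetherianity of $T$ applied to the embedding $M \otimes_\F N \hookrightarrow T^{ab}$, or directly by observing that if $m_1,\dots,m_k$ generate $M$ over $R$ and $n_1,\dots,n_\ell$ generate $N$ over $S$, then the $k\ell$ pure tensors $m_i \otimes n_j$ generate $M\otimes_\F N$ over $T = R \otimes_\F S$, because $(r \otimes s)(m_i \otimes n_j) = rm_i \otimes sn_j$ and every pure tensor $m \otimes n$ is an $\F$-linear combination of such products after expanding $m$ and $n$ in the generating sets.

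Finally I would invoke the ``(2)$\Rightarrow$(1)'' direction of Lemma \ref{lem: characterization-torsionfreeness 00}, now for the Noetherian domain $T$: any finitely generated $T$-submodule of a finitely generated free $T$-module is torsion free. This yields the claim. The only real subtlety worth flagging is that one must have $T$ actually be a domain to even speak of $T$-torsion freeness in the sense of this paper; this is precisely why the hypothesis ``$T$ is an affine domain'' is built into the statement of the lemma. No step is an obstacle: the whole argument hinges on the exactness of $-\otimes_\F-$ over a field, combined with Lemma \ref{lem: characterization-torsionfreeness 00} invoked twice.
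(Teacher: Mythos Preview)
Your proof is correct and follows essentially the same approach as the paper: embed $M\hookrightarrow R^a$, $N\hookrightarrow S^b$ via Lemma~\ref{lem: characterization-torsionfreeness 00}, tensor over $\F$ (using flatness) to get $M\otimes_\F N\hookrightarrow T^{ab}$, deduce finite generation from Noetherianity of $T$, and apply Lemma~\ref{lem: characterization-torsionfreeness 00} again. The only cosmetic differences are that the paper disposes of the zero case first and performs the tensoring in two steps ($\bullet\otimes_\F N$, then $R^a\otimes_\F\bullet$) rather than invoking bi-exactness directly.
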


\begin{proof}
Viewed as vector spaces, $M \otimes_\F N = 0$ if and only if $M = 0$ or $N = 0$, in which case torsion freeness is vacuous. So we will assume all three of $M, N,$ and $M \otimes_\F N$ are nonzero. Per Lemma \ref{lem: characterization-torsionfreeness 00}, suppose we have embeddings $M\subseteq R^a$ and $N \subseteq S^b$. Apply the functor $\bullet \otimes_{\mathbb F} N$ to the first inclusion to get $M \otimes N \subseteq R^a \otimes N$, which in turn is contained in $ R^a \otimes S^b$
by tensoring the inclusion $N \subseteq S^b$ with  $R^a$. Thus  $M \otimes N \subseteq R^a \otimes S^b \cong (R\otimes S)^{ab} = T^{ab},$ where the isomorphism is easily checked in the category of $\F$-vector spaces since direct sum commutes with tensor product. Of course, this inclusion holds in the category of $T$-modules, and all $T$-submodules of $T^{ab}$ are finitely generated since $T$ is Noetherian, so we are done by invoking Lemma \ref{lem: characterization-torsionfreeness 00} again.
\end{proof}

%{\color{red}{Lemma: Suppose $R$ and $S$ are Noetherian domains over a field $\mathbb F$, and $R\otimes_{\mathbb F}S$ is a domain. If $M$ and $N$ are f.g. torsion free $R$ and $S$ modules respectively, then $M\otimes_{\mathbb F} N$ is a f.g. torsion free $R\otimes S$ module.}Proof: Embed $M\subset R^a$ and $N \subset S^b$. Apply the functor $\otimes_{\mathbb F} N$ to the first inclusion to get $M \otimes N \subset R^a \otimes N$, which is contained in $ R^a \otimes S^b$  by tensoring the inclusion $N \subset S^b$ with  $R^a$. Thus  $M \otimes N \subset R^a \otimes S^b \cong (R\otimes S)^{ab}.$  }  

\begin{lem}\label{lem:symbpower-torsionfree-quotient} For any prime $P$ in any Noetherian ring $A$, the finitely generated module $P^{(a)}/P^{(a+1)}$ is torsion free as an $A/P$-module for all integers $a \ge 0$.
\end{lem}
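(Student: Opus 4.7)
The plan is to first verify that $P^{(a)}/P^{(a+1)}$ genuinely carries an $A/P$-module structure, i.e., that $P \cdot P^{(a)} \subseteq P^{(a+1)}$. I would prove the slightly more general multiplicativity $P^{(i)} \cdot P^{(j)} \subseteq P^{(i+j)}$: if $uf \in P^i$ and $vg \in P^j$ with $u,v \notin P$, then $(uv)(fg) \in P^{i+j}$ and $uv \notin P$ since $P$ is prime, so $fg \in P^{(i+j)}$. Taking $i=1$, $j=a$ yields the module structure. Finite generation over $A/P$ follows from Noetherianity of $A$, since $P^{(a)}$ is a finitely generated ideal. The edge case $a=0$ gives $P^{(0)}/P^{(1)} = A/P$, which is trivially torsion free over itself.

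For the torsion free property, I would argue directly from the localization definition $P^{(n)} = P^n A_P \cap A$. Suppose $\bar r \in A/P$ and $\bar x \in P^{(a)}/P^{(a+1)}$ satisfy $\bar r \cdot \bar x = 0$, lifting to $r \in A$ and $x \in P^{(a)}$ with $rx \in P^{(a+1)}$. If $r \in P$, then $\bar r = 0$ and we are done. Otherwise $r \notin P$, so its image $r/1$ is a unit in $A_P$; from $(rx)/1 \in P^{a+1} A_P$ we then obtain $x/1 \in P^{a+1} A_P$, whence $x \in P^{a+1} A_P \cap A = P^{(a+1)}$, i.e., $\bar x = 0$. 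This shows $\bar r \cdot \bar x = 0$ forces $\bar r = 0$ or $\bar x = 0$, which is exactly torsion freeness.

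There is no real obstacle here; the argument is essentially an exercise in unwinding the localization definition of symbolic powers combined with the primality of $P$. As an alternative, one could invoke criterion $(3)$ of Lemma \ref{lem: characterization-torsionfreeness 00} by showing $\operatorname{Ass}_{A/P}(P^{(a)}/P^{(a+1)}) = \{(0)\}$, but the direct localization argument above is more transparent and avoids any additional machinery about associated primes.
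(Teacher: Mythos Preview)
Your proof is correct and follows essentially the same approach as the paper: lift $\bar r$ and $\bar x$, assume $r \notin P$, localize at $P$ so that $r$ becomes a unit, and conclude $x \in P^{a+1}A_P \cap A = P^{(a+1)}$. You are slightly more thorough in explicitly checking the $A/P$-module structure and finite generation, which the paper leaves implicit, but the core torsion-free argument is identical.
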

\begin{proof}
Say $\overline{x} \in (P^{(a)}/P^{(a+1)})$ is killed by $\overline{r} \in A/P$.
This means, lifting to $A$, that $x\in P^{(a)}$ and  $rx \in   P^{(a+1)}$. Localize at $P$.  Then $rx \in   P^{(a+1)}A_P = P^{a+1}A_P.$ If $r \not\in P$, this means $x \in   P^{a+1}A_P\cap A = P^{(a+1)}.$ 
That is, either $\overline r =0$ in $A/P$ or otherwise, $\overline x = 0$ in $(P^{(a)}/P^{(a+1)})$. Ergo by definition,  $(P^{(a)}/P^{(a+1)})$ is a torsion-free $A/P$-module.
%If $P^{(a)}/P^{(a+1)} = 0$ (e.g., if $P = (0)$), the module is torsion-free vacuously. So suppose that the module   $P^{(a)}/P^{(a+1)} \neq (0)$, whence $P \neq (0)$ and the module is nonzero and  finitely generated. Per Lemma \eqref{lem: characterization-torsionfreeness 00}(3), the conclusion is equivalent to showing that $\mbox{Ass}_A (P^{(a)}/P^{(a+1)}) = \{P\}$. We show the latter. An  inclusion of nonzero $A$-modules  $N \subseteq M$ yields $\varnothing \neq \mbox{Ass}_A (N) \subseteq \mbox{Ass}_A (M)$. In particular, $\varnothing \neq \mbox{Ass}_A (P^{(a)}/P^{(a+1)}) \subseteq \mbox{Ass}_A (A/P^{(a+1)}) = \{P\}$. Thus $\mbox{Ass}_A (P^{(a)}/P^{(a+1)}) = \{P\}$, and we win.
\end{proof}

%\begin{lem}
%Suppose $M \neq 0$ is a finitely generated torsion-free module over $R/P$, and $N \neq 0$ is a finitely generated torsion-free module over $S/Q$: 
%say $M \subseteq (R/P)^{\oplus C}$, while $N \subseteq (S/Q)^{\oplus C}$ for some $C > 0$. 
%Then $M \otimes_\F N \neq 0$ is a  finitely generated torsion-free module over $R/P \otimes_\F S/Q$.
%, and indeed we have an embedding $$M \otimes_\F N \hookrightarrow (R/P)^{\oplus C} \otimes_\F (S/Q)^{\oplus C}.$$ 
%\end{lem}
%{\color{red}{Recall that vector spaces are free and hence faithfully flat.  We have $M\hookrightarrow (R/P)^n$, so tensoring over $\mathbb F$ with $N$, we have $M \otimes_F N \hookrightarrow (R/P)^n \otimes_{\mathbb F} N$. Also, we have $N\hookrightarrow (S/Q)^m$, so tensoring over $\mathbb F$ with $(R/P)^n$, we have $(R/P)^n \otimes_{\mathbb F}  N\hookrightarrow (R/P)^n \otimes_{\mathbb F} (S/Q)^m$. Composing these two inclusions, we have  $M \otimes_F N \hookrightarrow (R/P)^n \otimes_{\mathbb F} (S/Q)^m$ FINISH THIS.}}

Finally, we record a consequence of Lemma  \ref{lem:tensorproduct-torsionfreeness01} that will be important in the next subsection. The following proposition follows immediately from Lemmas \ref{lem:tensorproduct-torsionfreeness01} and \ref{lem:symbpower-torsionfree-quotient} %Our immediate goal here is to record a consequence of Lemma  \eqref{lem:tensorproduct-torsionfreeness01} below that will be important in the next subsection. 

\begin{prop}\label{cor:torsionfree-tensorproduct 001}
Suppose that all three of $R$, $S$, and $T = R\otimes_{\mathbb F}S$ are affine domains over a field $\mathbb F$. 
%Let $\F$ be an algebraically closed field. 
Fix two prime ideals $P$ and $Q$ in $R$ and $S$ respectively, such that the affine $\F$-algebra $T' = (R/P) \otimes_\F (S/Q)$ is a domain. 
%for which $P^{(a)}/P^{(a+1)} \neq 0$ and $Q^{(b)}/Q^{(b+1)} \neq 0$
Then $(P^{(a)} / P^{(a+1)}) \otimes_\F (Q^{(b)}/ Q^{(b+1)})$ is finitely generated and torsion free  over $T'$ for any pair of nonnegative integers $a$ and $b$. 
\end{prop}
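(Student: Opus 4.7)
The plan is simply to stack the two preceding lemmas, with the role of $(R,S,T)$ in Lemma \ref{lem:tensorproduct-torsionfreeness01} played by the quotients $(R/P,\,S/Q,\,T')$. First I would verify that this substitution is legal: $R/P$ and $S/Q$ are affine $\F$-algebras (a quotient of a finitely generated $\F$-algebra is finitely generated) which are domains because $P$ and $Q$ are prime, and by the standing hypothesis of the proposition, $T' = (R/P)\otimes_\F (S/Q)$ is a domain as well. So the triple $(R/P,\,S/Q,\,T')$ satisfies exactly the running assumption of Lemma \ref{lem:tensorproduct-torsionfreeness01}.

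Next I would feed in the modules. By Lemma \ref{lem:symbpower-torsionfree-quotient} applied to the prime $P \subseteq R$, the module $M := P^{(a)}/P^{(a+1)}$ is a finitely generated torsion-free module over $R/P$; symmetrically, $N := Q^{(b)}/Q^{(b+1)}$ is finitely generated and torsion-free over $S/Q$. The degenerate indices $a=0$ or $b=0$ are harmless, since by convention $P^{(0)} = R$, so $P^{(0)}/P^{(1)} = R/P$ is tautologically torsion-free over itself (and similarly for $Q$).

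Finally, applying Lemma \ref{lem:tensorproduct-torsionfreeness01} to the pair $(M,N)$ over $(R/P,\,S/Q)$ yields immediately that
$M \otimes_\F N = (P^{(a)}/P^{(a+1)}) \otimes_\F (Q^{(b)}/Q^{(b+1)})$
is a finitely generated torsion-free $T'$-module, which is the claim.

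The only place calling for any care is ensuring that the hypotheses of Lemma \ref{lem:tensorproduct-torsionfreeness01} transport cleanly to the quotient rings, and this is precisely the reason the proposition's statement includes the assumption that $T'$ be a domain. Accordingly I do not anticipate any real obstacle; the content of the proposition is just the composition of the two lemmas once the substitution of rings has been made explicit.
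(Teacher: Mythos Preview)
Your proposal is correct and matches the paper's approach exactly: the paper simply states that the proposition follows immediately from Lemmas \ref{lem:tensorproduct-torsionfreeness01} and \ref{lem:symbpower-torsionfree-quotient}, and you have spelled out precisely this composition, including the necessary substitution $(R,S,T)\mapsto (R/P,\,S/Q,\,T')$.
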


\noindent \textbf{Proving the Multinomial Theorem.} 
Working over an algebraically closed field $\F$, we fix two affine $\F$-algebras $R$ and $S$ that are domains, and two  prime ideals $P \subseteq R$, $Q \subseteq S$. Let 
$$T = R \otimes S \supseteq P \otimes S + R \otimes Q =: PT+QT, \quad T'  = (R/P) \otimes (S/Q) \cong T/(PT+QT),$$ where all tensor products are over $\F$. 
Both $T$ and $T'$ are affine domains over $\F$. Because $\F$ is algebraically closed, the extended ideals $PT, QT$ are both prime, along with their sum $PT + QT$.  
 We cannot relax the assumption that $\F$ is algebraically closed to its merely being perfect. 
%The extended ideals $PT, QT$ are both prime, along with their sum $PT + QT$.  In stating that each of $PT+QT$, $PT$, $QT$ will be prime, we cannot relax our hypothesis on $\F$ to being perfect. 
For instance, $\RR$ is perfect (being of characteristic  zero), and along the ring extension 
$$S : = \frac{\RR[x]}{(x^2+1)} \cong \C  \hookrightarrow T : = \frac{\C[x]}{(x^2+1)} \cong \C \otimes_\RR S \cong \C \otimes_\RR \C$$ the zero ideal of $S$ (which is maximal) extends to a radical ideal which is not prime. 
%Thus the sum $(0)T = (0) S + (0) S$ of the extended zero ideals  fails to be primary.  
%Indeed, since $S$ is isomorphic to $\C$, we have   $$T \cong \frac{\RR[x]}{(x^2+1)} \otimes_\RR \frac{\RR[y]}{(y^2+1)} \cong \frac{\RR[x, y]}{(x^2+1, y^2+1)} = \frac{\RR[x, y]}{(x^2-y^2, y^2+1)},$$ which is reduced, and $\overline{x-y} \neq 0$ is a zerodivisor which is not nilpotent. Thus the zero ideal of $T$, the sum of the extensions in $T$ of the zero ideals in each copy of $S$, is not primary.    
%Lemma 2.4 in Ha-Trung-Trung shows that the extended ideal is always unmixed in both of these examples.

Relative to a flat map $\phi \colon A \to B$ of Noetherian rings, we define the ideal $JB := \langle \phi(J) \rangle B$ for any ideal $J$ in $A$. Then $J^r B = (JB)^r$ for all $r \ge 0$, since the two ideals share a generating set. 
% For any $A$-module $E$, (Matsumura \cite{Matsumura}, Thm 23.2 (ii))  gives  \begin{equation}\label{eqn: associated primes and flatness 001}  \operatorname{Ass}_B (E \otimes_A B) = \bigcup_{P \in \operatorname{Ass}_A (E)} \operatorname{Ass}_B (B/P B).\end{equation}
We define the set 
$\mathcal{P}(A) = \{\mbox{prime ideals }P \subseteq A  \colon 
PB \mbox{ is prime} \}$ to consist of prime ideals that extend along $\phi$ to prime ideals of $B$. We now record without proof a handy proposition.  

\begin{prop}[Cf., {\cite[Prop.~2.1]{Walker002}}]\label{prop: faithful flatness criterion 001}
Suppose $\phi \colon A \to B$ is a faithfully flat map of Noetherian rings. Then for each prime ideal $P \in \mathcal{P}(A)$ and all integer pairs $(N, r) \in (\Z_{\ge 0})^2$, we have 
\begin{equation}\label{eqn: faithful flatness criterion 001}
P^{(N)} B = (PB)^{(N)},
\end{equation} 
and 
$P^{(N)} \subseteq P^r$ if and only if $(PB)^{(N)} =  P^{(N)}B \subseteq P^r B = (PB)^r. $ 
\end{prop}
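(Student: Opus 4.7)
The plan is to prove the key equality $P^{(N)} B = (PB)^{(N)}$ by two inclusions; the asserted equivalence of containments will then fall out.

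For the easy inclusion $P^{(N)} B \subseteq (PB)^{(N)}$, I would take $f \in P^{(N)}$ and choose $u \in A \setminus P$ with $uf \in P^N$. Applying $\phi$, $\phi(u)\phi(f) \in P^N B = (PB)^N$, and faithful flatness forces $\phi(u) \notin PB$: otherwise $u$ would land in $PB \cap A = P$, contradicting the choice of $u$. Hence $\phi(f) \in (PB)^N B_{PB} \cap B = (PB)^{(N)}$, and since $P^{(N)} B$ is generated over $B$ by such $\phi(f)$, the inclusion follows.

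The substantive step is the reverse inclusion $(PB)^{(N)} \subseteq P^{(N)} B$, which I would obtain by showing that $P^{(N)} B$ is itself $PB$-primary. Once that is in hand, $(PB)^{(N)} \subseteq P^{(N)} B$ because $(PB)^{(N)}$ is by definition the smallest $PB$-primary ideal containing $(PB)^N = P^N B$, and $P^{(N)} B$ is such an ideal. To verify the primary condition, I would invoke the standard formula for associated primes under flat base change: for a flat map $A \to B$ and any finitely generated $A$-module $M$,
$$\operatorname{Ass}_B(M \otimes_A B) = \bigcup_{\mathfrak p \,\in\, \operatorname{Ass}_A(M)} \operatorname{Ass}_B(B/\mathfrak p B).$$
Applied with $M = A/P^{(N)}$, whose only associated prime is $P$ (since $P^{(N)}$ is $P$-primary), this collapses to $\operatorname{Ass}_B(B/P^{(N)} B) = \operatorname{Ass}_B(B/PB) = \{PB\}$, the last equality because $B/PB$ is a domain by the hypothesis $P \in \mathcal{P}(A)$. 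So $P^{(N)} B$ has unique associated prime $PB$ and is therefore $PB$-primary.

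Granted the equality $P^{(N)} B = (PB)^{(N)}$, the equivalence of the two containments is a short flatness exercise: the forward direction is immediate by extending, while for the converse I would assume $P^{(N)} B \subseteq P^r B$ and contract back to $A$, using the faithful flatness identity $IB \cap A = I$ valid for every ideal $I \subseteq A$. The main obstacle is the reverse inclusion $(PB)^{(N)} \subseteq P^{(N)} B$; naively lifting an arbitrary $g \in (PB)^{(N)}$ to an element of $P^{(N)}$ is not feasible elementwise, so the associated-primes viewpoint (equivalently, the fact that under a flat map a primary ideal whose radical stays prime extends to a primary ideal) is really what drives the argument.
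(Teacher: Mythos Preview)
The paper does not actually prove this proposition: it is recorded without proof, with a citation to \cite[Prop.~2.1]{Walker002}. So there is no in-paper argument to compare against.

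That said, your argument is correct and is essentially the standard one. The easy inclusion is fine as written. For the reverse inclusion, your use of the flat base change formula for associated primes (applied to $M = A/P^{(N)}$) is exactly the right tool, and the hypothesis $P \in \mathcal{P}(A)$ is used precisely where you use it, to conclude $\operatorname{Ass}_B(B/PB) = \{PB\}$. One small point worth making explicit: for $N \ge 1$ you should note that $P^{(N)} B$ is a \emph{proper} ideal of $B$ (by faithful flatness, since $P^{(N)} \subsetneq A$), so that the associated-prime computation is not vacuous; the case $N = 0$ is trivial. The final equivalence via $IB \cap A = I$ under faithful flatness is routine.
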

\noindent When $B$ is a polynomial ring in finitely many variables over $A$ and $\phi$ is inclusion, $\mathcal{P}(A) = \mbox{Spec}(A)$. It is possible that $\mathcal{P}(A ) \neq \operatorname{Spec}(A)$ in Proposition \ref{prop: faithful flatness criterion 001}, per the $T \cong \C \otimes_{\RR} \C$ example. Working over a field $\F$, we  use Proposition \ref{prop: faithful flatness criterion 001} when $B = A \otimes_\F C$ for two affine $\F$-algebras, %(Exer. 9.11 in Altman-Kleiman \cite{Alt-Klei13}), 
so $B$ is an affine $\F$-algebra; when $A$ and $C$ are domains and $\F$ is algebraically closed, $B$ is a domain, $\mathcal{P}(A) = \operatorname{Spec}(A)$ and $\mathcal{P}(C) = \operatorname{Spec}(C)$. %When $B$ is a polynomial ring in finitely many variables over $A$ and $\phi$ is inclusion, $\mathcal{P}(A) = \mbox{Spec}(A)$. %However, prior to stating Theorem \eqref{thm:binomialexpansion001}, 
%However, our example $T \cong \C \otimes_{\RR} \C$ shows that extensions of prime ideals may fail to be prime along an arbitrary faithfully flat extension of Noetherian rings. 
%Our first key fact is the following 

We are now ready to prove a binomial theorem for the  symbolic powers of $PT + QT$. %But first, we state a proposition.

\begin{theorem}\label{thm:binomialexpansion001}
\textit{For all $n \ge 1$, the symbolic power $(PT+QT)^{(n)} = \sum_{a+b=n} (PT)^{(a)} (QT)^{(b)}$.}
\end{theorem}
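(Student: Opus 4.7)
The plan is to prove both containments separately, drawing on the torsion-freeness results and the tensor-product structure.

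For $\supseteq$: First, apply Proposition~\ref{prop: faithful flatness criterion 001} to the faithfully flat map $R\to T$ (with $P\in\mathcal{P}(R)$, since $\F$ being algebraically closed forces $(R/P)\otimes_\F S$ to be a domain, so that $PT$ is prime), giving $(PT)^{(a)} = P^{(a)}T = P^{(a)}\otimes_\F S$. Similarly $(QT)^{(b)} = R\otimes_\F Q^{(b)}$. Hence $(PT)^{(a)}(QT)^{(b)}$ coincides with the ideal $P^{(a)}\otimes_\F Q^{(b)}\subseteq T$. For a generator $p\otimes q$ with $p\in P^{(a)}$ and $q\in Q^{(b)}$, pick $s\in R-P$ with $sp\in P^a$ and $t\in S-Q$ with $tq\in Q^b$. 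Then $(s\otimes t)(p\otimes q) = sp\otimes tq \in (PT)^a(QT)^b \subseteq (PT+QT)^{a+b}$. The multiplier $s\otimes t$ avoids $PT+QT$ because its image in $T' = (R/P)\otimes_\F(S/Q)$ is the tensor $\bar s\otimes\bar t$ of two nonzero elements of $\F$-vector spaces, which is nonzero over a field. This shows $p\otimes q\in (PT+QT)^{(a+b)}$, and summing yields $\sum_{a+b=n}(PT)^{(a)}(QT)^{(b)}\subseteq (PT+QT)^{(n)}$.

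For $\subseteq$: Set $J_n := \sum_{a+b=n}(PT)^{(a)}(QT)^{(b)}$. Since $J_n\supseteq (PT+QT)^n$, it is enough to prove that $J_n$ is $(PT+QT)$-primary: then $J_n$ must contain the $(PT+QT)$-primary component of $(PT+QT)^n$, which is precisely $(PT+QT)^{(n)}$. I will deduce primariness from the stronger assertion that $T/J_n$ is torsion-free as a module over the domain $T' = T/(PT+QT)$. Indeed, any associated prime of $T/J_n$ must contain $\sqrt{J_n}\supseteq PT+QT$, and torsion-freeness over $T'$ forbids strict containment, pinning the unique associated prime down to $PT+QT$.

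To prove $T/J_n$ is $T'$-torsion-free, I will use the descending filtration $T = J_0\supseteq J_1\supseteq\cdots\supseteq J_n$ (with $J_1 = PT+QT$). Since extensions of torsion-free $T'$-modules by torsion-free $T'$-modules are again torsion-free, it suffices to verify this property for the top quotient $T/J_1 = T'$ (which is trivial) and for each subquotient $J_k/J_{k+1}$ with $1\le k<n$. The main obstacle is identifying each $J_k/J_{k+1}$ as a torsion-free $T'$-module. My plan is to pick a nested $\F$-basis of $R$ refining the chain $\cdots\subseteq P^{(a+1)}\subseteq P^{(a)}\subseteq\cdots\subseteq R$, and analogously of $S$, so that the induced $\F$-basis of $T = R\otimes_\F S$ simultaneously refines every subspace $P^{(a)}\otimes_\F Q^{(b)}$. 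A direct bookkeeping of which basis tensors lie in $J_k$ but not in $J_{k+1}$ will then yield a $T'$-module isomorphism
$$J_k/J_{k+1} \;\cong\; \bigoplus_{a+b=k}\bigl(P^{(a)}/P^{(a+1)}\bigr)\otimes_\F\bigl(Q^{(b)}/Q^{(b+1)}\bigr).$$
Proposition~\ref{cor:torsionfree-tensorproduct 001} then guarantees each summand, hence the direct sum, is torsion-free over $T'$, completing the argument.
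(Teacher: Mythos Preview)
Your argument is correct and follows essentially the same route as the paper: the easy containment via multipliers $s\otimes t$, the reduction of the hard containment to showing $J_n$ is $(PT+QT)$-primary, the filtration by the $J_k$, the $\F$-vector-space identification $J_k/J_{k+1}\cong\bigoplus_{a+b=k}(P^{(a)}/P^{(a+1)})\otimes_\F(Q^{(b)}/Q^{(b+1)})$ via nested bases, and the appeal to Proposition~\ref{cor:torsionfree-tensorproduct 001}. One caveat on phrasing: $T/J_n$ is not literally a $T'$-module for $n\ge 2$ (since $PT+QT\not\subseteq J_n$), so ``$T/J_n$ is torsion-free over $T'$'' should be read as ``each $r\in T\setminus(PT+QT)$ acts injectively on $T/J_n$''---which is precisely primariness and does follow from your extension argument on the subquotients, or equivalently (as the paper phrases it) from $\operatorname{Ass}_T(T/J_n)\subseteq\bigcup_k\operatorname{Ass}_T(J_{k-1}/J_k)=\{PT+QT\}$.
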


\begin{proof}
We'll drop the $T$'s from the notation, and we will assume that both $P, Q$ are nonzero to justify the effort. For $0 \le c \le n$, set 
$J_c = \sum_{t=0}^c P^{(c-t)} Q^{(t)}$, so $J_{c} \subseteq J_{c-1}$ for all $1 \le c \le n$, since $P^{(c-t)} \subseteq P^{(c-1 - t)}$ for $t \le c-1$ and for $t = c$, $Q^{(c)} \subseteq Q^{(c-1)}$. 
%$$S_0 = P^{(n)} \subseteq S_1 \subseteq \cdots \subseteq S_n \stackrel{\mathbf{(!)}}{\subseteq} (P+Q)^{(n)}.$$
%$S_p = \sum_{t=0}^p P^{(n-t)} Q^{(t)}$, so 
%$$S_0 = P^{(n)} \subseteq S_1 \subseteq \cdots \subseteq S_n \stackrel{\mathbf{(!)}}{\subseteq} (P+Q)^{(n)}.$$ 
Note that $$(P+Q)^n = \sum_{a+b=n} P^{a} Q^{b} \subseteq J_n = \sum_{a+b= n} P^{(a)} Q^{(b)} \stackrel{(!)}{\subseteq} (P+Q)^{(n)},$$ 
and \textbf{(!)} is easy to verify term-by-term for each $P^{(a)}Q^{(b)}$.  Indeed, $P^{(a)}Q^{(b)}$ is generated by elements of the form $fg$ with 
 $f\in P^{(a)} \subset R$ and $g\in Q^{(b)} \subset S$ (viewing them as elements of $T$). We need $fg \in (P+Q)^{(a+b)}.$ 
% or equivalently, that $fg \in (P+Q)^{a+b}T_{P+Q}$.
Per Proposition \ref{prop: faithful flatness criterion 001}, there exist $u \in R - P$ and $v \in S -  Q$ such that $uf \in P^{a}$ and 
 $vg \in Q^{b}$. Viewing $u$ and $v$ as elements of the overring $T$, we have $uv \not\in (P+Q)$. Indeed, since $P+Q$ is prime, if $uv \in P+Q$, then either $u$ or $v$ is in $P+Q$, but $(P+Q)T\cap R = P$ and $(P+Q)T\cap S = Q,$ contradicting that $u\not\in P$ and $v\not\in Q$.
Therefore, in $T$,  $(uf)(vg) = (uv)(fg)  \in P^aQ^b \subset (P+Q)^{a+b}$, which means $fg \in  (P+Q)^{(a+b)}.$ Thus \textbf{(!)} holds,  and notably $J_n$ is a proper ideal--read, $J_n \subsetneqq T$. 
%$P^{(a)} \otimes S \subseteq (P+Q)^{(a)}$ for all $a \ge 0$: if $u \in R-P$, then $\overline{u \otimes 1} \neq 0$ is a nonzerodivisor in the domain $$T' = (R/P) \otimes (S/Q) \cong \frac{R \otimes S}{P \otimes S + R \otimes Q} = T /(P+Q).$$ In particular, $u \otimes 1 \not\in P \otimes S + R \otimes Q$.  For any $r \otimes 1 \in P^{(a)} \otimes S$, $$(u \otimes 1) (r \otimes 1) = (ur) \otimes 1 \in (P \otimes S)^a \subseteq (P \otimes S + R \otimes Q)^a,$$  and since $u \otimes 1 \not\in P \otimes S + R \otimes Q$, by definition $(r \otimes 1) \in (P \otimes S + R \otimes Q)^{(a)}$. It follows that $P^{(a)} \otimes S \subseteq (P+Q)^{(a)}$. Similarly, $R \otimes Q^{(b)} \subseteq (P+Q)^{(b)}$ for any $b \ge 0$. But then $$P^{(a)}Q^{(b)} \subseteq (P+Q)^{(a)} (P+Q)^{(b)} \subseteq (P+Q)^{(a+b)} = (P+Q)^{(n)}$$ when $a+b = n$. Thus \textbf{(!)} holds. 

Since $J_n$ contains $(P+Q)^n $, and $(P+Q)^{(n)}$ is the smallest $(P+Q)$-primary ideal containing $(P+Q)^n$, the opposite inclusion to (!) will follow once we show that $J_n$ is $(P+Q)$-primary, i.e., that the set of associated primes $\mbox{Ass}_T (T/J_n) = \{P+Q\}$.  
We have short exact sequences  of $T$-modules
$$0 \to J_{c-1}/J_c \to T/J_c \to T/J_{c-1} \to 0, \quad \mbox{ for all }1 \le c \le n.$$
Thus $\mbox{Ass}_T (J_{c-1}/J_c) \subseteq \mbox{Ass}_T (T/J_c) \subseteq \mbox{Ass}_T (J_{c-1}/J_c) \cup \mbox{Ass}_T (T/J_{c-1})$ for all $1 \le c \le n$, using the fact that 
given an inclusion of modules $N \subseteq M$, %(cf., Altman-Kleiman \cite{Alt-Klei13}, Prop. (17.5)),
$$\mbox{Ass}(N) \subseteq \mbox{Ass}(M) \subseteq \mbox{Ass}(N) \cup \mbox{Ass}(M/N).$$ 
Thus by iterative unwinding and using that $J_0 = T$, i.e., $\mbox{Ass}_T (T/J_0) = \varnothing$, we conclude that
\begin{align}\label{eqn:associatedsandwich00}
\varnothing \neq \mbox{Ass}_T (T/J_n) \subseteq \bigcup_{c=1}^n \mbox{Ass}_T (J_{c-1}/J_{c}).
\end{align}
Taking all direct sums and tensor products over $\F$, we have a series of vector space isomorphisms 
%(for $1 \le p \le  n$)
\begin{align}\label{eqn:symbolicquotientisom00}
J_{c-1} / J_{c} \cong \bigoplus_{a+b = c-1} [P^{(a)}/ P^{(a+1)} \otimes Q^{(b)} / Q^{(b+1)}], \quad 1 \le c \le n.
\end{align}
We prove this first, considering two chains of symbolic powers, where each ideal is expressed as a direct sum of $\F$-vector spaces:
\begin{align*}
P^{(c)} = V_0 \subseteq P^{(c-1)} = V_0 \oplus V_1 &\subseteq \ldots \subseteq P^{(0)} = R = V_0 \oplus \cdots \oplus V_{c}, \\
Q^{(c)} = W_0 \subseteq Q^{(c-1)} = W_0 \oplus W_1 &\subseteq \ldots \subseteq Q^{(0)} = S = W_0 \oplus \cdots \oplus W_{c}.
\end{align*}
In particular, for all pairs $0 \le a, b \le c-1$, 
\begin{align*}
P^{(a)} = \bigoplus_{i=0}^{c-a} V_i, \quad  P^{(a+1)} = \bigoplus_{i=0}^{c-a-1} V_i, \quad  
Q^{(b)} = \bigoplus_{j=0}^{c-b} W_j, \quad  Q^{(b+1)} = \bigoplus_{j=0}^{c-b-1} W_j.
\end{align*}
For any pair $a, b$ as above with $a+b = c-1$,  $c - b = a+1$, and so 
$$\bigoplus_{a+b = c-1} \frac{P^{(a)}}{P^{(a+1)}} \otimes_\F  \frac{Q^{(b)}}{Q^{(b+1)}} \cong \bigoplus_{a+b=c-1} V_{c-a} \otimes W_{c-b} =  \bigoplus_{a=0}^{c-1} V_{c-a} \otimes W_{a+1}.$$
We now prove \eqref{eqn:symbolicquotientisom00} by  killing off a common vector space. 
First,
\begin{align*}
J_{c-1} = \sum_{a+b=c-1} P^{(a)} Q^{(b)} &= \bigoplus_{\stackrel{0 \le a \le c-1}{0 \le i \le c-a,  0 \le j \le a+1}} V_i \otimes W_j \\ 
&= \boxed{\bigoplus_{\stackrel{0 \le a \le c-1}{0 \le i < c-a \mbox{ or } 0 \le j < a+1}} (V_i \otimes W_j) } 
\oplus 
\bigoplus_{a=0}^{c-1} V_{c-a} \otimes W_{a+1} , \\
\mbox{ while } \quad J_{c} = \sum_{a+b=c} P^{(a)} Q^{(b)} &= \boxed{\bigoplus_{\stackrel{0 \le a \le c}{0 \le i \le c-a, 0 \le j \le a}} V_i \otimes W_j. }  
\end{align*}
Identifying repeated copies of a $V_i \otimes V_j$ term with $i+ j \le c$ (we can do this since we are working with vector subspaces of the ring $T$), it is straightforward to check that the boxed sums are equal. Thus for each $1 \le c \le n$, we have canonical isomorphisms of $\F$-vector spaces:
\begin{align*}
J_{c-1}/J_c \cong \bigoplus_{a=0}^{c-1} V_{c-a} \otimes W_{a+1} \cong \bigoplus_{a+b = c-1} \frac{P^{(a)}}{P^{(a+1)}} \otimes_\F  \frac{Q^{(b)}}{Q^{(b+1)}} .
\end{align*}

Therefore, since for each $1 \le c \le n$ there is a natural surjective $T$-module map (hence $\F$-linear)
$$
\bigoplus_{a+b = c-1} [P^{(a)}/ P^{(a+1)} \otimes Q^{(b)} / Q^{(b+1)}] \rightarrow J_{c-1} / J_{c}, 
$$
this map must be injective per isomorphism \eqref{eqn:symbolicquotientisom00}.
%Each $J_{c-1}/J_c$ and summand $P^{(a)}/ P^{(a+1)} \otimes Q^{(b)} / Q^{(b+1)}$ has the structure of a $T$-module, \textbf{so we can take the direct sum over $T$ by extending scalars}.  
%Thus (cf., \cite{Alt-Klei13}, Exer. (17.6))
Thus for all $1 \le c \le n$,  
$$\mbox{Ass}_T (J_{c-1}/J_{c}) = \bigcup_{a+b = c-1} \mbox{Ass}_T  [P^{(a)}/ P^{(a+1)} \otimes Q^{(b)} / Q^{(b+1)}].$$
For any $1 \le c \le n$ such that $J_{c-1}/J_c \neq 0$, i.e., $\operatorname{Ass}_T (J_{c-1}/J_c) \neq \varnothing$, in turn the above identity implies that one of the modules $P^{(a)}/ P^{(a+1)} \otimes Q^{(b)} / Q^{(b+1)}$ is nonzero, in which case  
\begin{equation}\label{eqn: associatedprimesidentity00}
\mbox{Ass}_T (J_{c-1}/J_{c}) = \bigcup_{a+b = c-1} \mbox{Ass}_T  [P^{(a)}/ P^{(a+1)} \otimes Q^{(b)} / Q^{(b+1)}] = \{P+Q\}.
\end{equation} To explain the  right-hand equality: for any pair $(a, b) \in (\Z_{\ge 0})^2$, Proposition  \ref{cor:torsionfree-tensorproduct 001} says that $$M_{a, b} : = P^{(a)}/ P^{(a+1)} \otimes Q^{(b)} / Q^{(b+1)}$$ is a finitely generated torsion-free module over $T' = (R/P) \otimes (S/Q) \cong T / (P+Q)$; 
%, which is Noetherian since $T$ is Noetherian by hypothesis; 
thus when $M_{a, b} \neq 0$, we have $\mbox{Ass}_{T/(P+Q)} (M_{a, b}) = \{(0)\}$ by Lemma \ref{lem: characterization-torsionfreeness 00}: that is, $\mbox{Ass}_{T} (M_{a, b}) = \{P+Q\}$.  
%Thus we have two short exact sequences: 
%\begin{align*}
%0 &\to J_c / J_{c-1} \to T/ J_{c-1} \to T/J_c \to 0 \\ 0 &\to J_c / J_{c-1} \to T/ P^{(n-p+1)} Q^{(p)} \to T/P^{(n-p)} Q^{(p)} \to 0 \end{align*}

%When you have an inclusion of modules $N \subseteq M$, we have that 
%$$\mbox{Ass}(N) \subseteq \mbox{Ass}(M) \subseteq \mbox{Ass}(N) \cup \mbox{Ass}(M/N).$$ Thus for the short exact sequence we have  
%$$\mbox{Ass}(S_p / S_{p-1}) \subseteq \mbox{Ass}(T / S_{p-1}) \subseteq \mbox{Ass}(S_p / S_{p-1}) \cup \mbox{Ass}(T / S_{p}).$$
% Next the quotient 
%\begin{align*}
%\frac{(P+Q)^{(n)}}{(P+Q)^{(n+1)}}  \cong \bigoplus_{a+b = n} [P^{(a)}/ P^{(a+1)} \otimes Q^{(b)} / Q^{(b+1)}].
%\end{align*}
Finally, combining \eqref{eqn: associatedprimesidentity00} with the inclusion  \eqref{eqn:associatedsandwich00} for $\mbox{Ass}_T (T/J_n) \neq \varnothing$--recall, $J_n$ is a proper ideal,   
we conclude that $\mbox{Ass}_T (T/J_n) = \bigcup_{c=1}^n \mbox{Ass}_T (J_{c-1}/J_{c}) = \{P+Q\},$ 
that is, the ideal $J_n$ is $(P+Q)$-primary as was to be shown. Thus $J_n \supseteq (P+Q)^{(n)},$ and indeed this is an equality. 
\end{proof}

We now deduce a multinomial theorem by induction on the number of tensor factors:

\begin{theorem}\label{thm:multinomial001}
\textit{Let $\F$ be an algebraically closed field. Let $R_1, \ldots, R_n$ $(n \ge 2)$ be affine commutative $\F$-algebras which are domains. Fix any $n$ prime ideals $P_i$ in $R_i$, and consider the expanded ideals $P_i' = P_iT$ in the affine domain $T = (\bigotimes_{\F})_{i=1}^n R_i$. 
Then the symbolic power  
\begin{equation}\label{eqn:multinomialexpansion01}
\left(\sum_{i=1}^{n} P_i'\right)^{(N)} =  \sum_{A_1 + \cdots + A_n =N} \prod_{i=1}^n (P_i')^{(A_i)} \mbox{ for any $N \ge 0$}.
\end{equation}
}
\end{theorem}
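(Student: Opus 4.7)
The plan is to prove \eqref{eqn:multinomialexpansion01} by induction on $n$, with the binomial Theorem \ref{thm:binomialexpansion001} serving as the base case $n=2$. Throughout, associativity of the tensor product lets us regard $T = T' \otimes_\F R_n$, where $T' := (\bigotimes_{\F})_{i=1}^{n-1} R_i$. For the inductive step, I would set
$$Q' := \sum_{i=1}^{n-1} P_i T' \subseteq T', \qquad \widetilde{Q} := Q' T + P_n T = \sum_{i=1}^n P_i' \subseteq T,$$
and want to rewrite $\widetilde{Q}^{(N)}$ first as a binomial expansion in $Q'T$ and $P_n T$, then invoke the inductive hypothesis on $(Q')^{(a)}$ to reach the full multinomial.

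Before any symbolic-power gymnastics, I would verify that the hypotheses of Theorem \ref{thm:binomialexpansion001} apply to the pair $(T', R_n)$ with primes $(Q', P_n)$. Since $\F$ is algebraically closed, the tensor product of affine $\F$-domains is again an affine $\F$-domain (Milne, cited in the excerpt), so $T'$ is a domain iteratively. The same iterative argument, applied to the quotients $R_i/P_i$, shows that $T'/Q' \cong \bigotimes_{i=1}^{n-1}(R_i/P_i)$ is a domain, hence $Q'$ is prime in $T'$; and $T/\widetilde{Q} \cong \bigotimes_{i=1}^n (R_i/P_i)$ is a domain, so $\widetilde{Q}$ is prime in $T$. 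These are precisely the inputs the binomial theorem demands.

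With those in hand, Theorem \ref{thm:binomialexpansion001} applied to $Q' \subseteq T'$ and $P_n \subseteq R_n$ yields
$$\widetilde{Q}^{(N)} \; = \; \sum_{a+b=N} (Q'T)^{(a)}\,(P_n T)^{(b)}.$$
Next I would pass symbolic powers through the flat extensions $T' \hookrightarrow T$ and $R_n \hookrightarrow T$ using Proposition \ref{prop: faithful flatness criterion 001}: since $Q' \in \mathcal{P}(T')$ and $P_n \in \mathcal{P}(R_n)$ (both are $\operatorname{Spec}$ when $\F$ is algebraically closed), we have $(Q'T)^{(a)} = (Q')^{(a)} T$ and $(P_n T)^{(b)} = (P_n)^{(b)} T = (P_n')^{(b)}$. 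The inductive hypothesis then expands
$$(Q')^{(a)} \;=\; \sum_{A_1+\cdots+A_{n-1}=a}\, \prod_{i=1}^{n-1} (P_i T')^{(A_i)} \;=\; \sum_{A_1+\cdots+A_{n-1}=a}\, \prod_{i=1}^{n-1} (P_i)^{(A_i)} T',$$
again invoking Proposition \ref{prop: faithful flatness criterion 001} for the last equality. Extending to $T$ and reindexing with $A_n := b$ gives
$$\widetilde{Q}^{(N)} \;=\; \sum_{A_1+\cdots+A_n=N}\, \prod_{i=1}^n (P_i')^{(A_i)},$$
which is \eqref{eqn:multinomialexpansion01}.

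I expect the only real subtlety to be the verification that primality and the domain property propagate through every layer of the induction; once those are secured, the binomial theorem plus faithful flatness do all the heavy lifting, and the inductive collapse is purely formal. A minor bookkeeping point will be handling the $N=0$ case (both sides equal $T$) and the convention $(P_i')^{(0)} = T$, but nothing beyond that should be delicate.
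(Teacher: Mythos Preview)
Your proposal is correct and follows essentially the same route as the paper: induction on $n$ with Theorem \ref{thm:binomialexpansion001} as base case, peeling off one tensor factor (you peel off $R_n$, the paper peels off $R_1$), applying the binomial theorem to the two-factor split, and transporting symbolic powers along the flat inclusions via Proposition \ref{prop: faithful flatness criterion 001}. Your verification that $Q'$ is prime via the quotient isomorphism $T'/Q' \cong \bigotimes_{i<n}(R_i/P_i)$ is exactly what is needed, and your bookkeeping with the extensions is slightly tidier than the paper's, which records only an inclusion from distributivity and then remarks that the reverse containment is easy.
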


\begin{proof} 
%As a final benchmark, we indicate when one can use induction to state and deduce versions of Theorem \eqref{thm:binomialexpansion001} and  Corollaries \eqref{cor:harbhun001},  \eqref{cor:harbhun002} for $n$-fold tensor products consisting of $n \ge 3$ affine domain factors via a multinomial expansion of symbolic powers. 
Induce on the number $n$ of tensor factors with base case $n=2$ being Theorem \ref{thm:binomialexpansion001}. Now suppose $n \ge 3$, and assume the result for tensoring up to $n-1$ factors. Suppose that $R = R_1 $ and $S =  R_2 \otimes_\F \cdots \otimes_\F R_n$, and that we have an expansion result in $S$ of the form
\begin{equation}\label{eqn:multinomial-induction001}
\left(\sum_{i=2}^n P_i\right)^{(N)} = \sum_{A_2 + \ldots + A_n = N} \prod_{i=2}^{n} P_i^{(A_i)} \quad \mbox{ for all nonnegative integers }N 
\end{equation}
for $n-1$ primes $P_i \subseteq R_i$  ($2 \le i \le n$). %such that 
The sum $Q : = \sum_{i=2}^n P_i$ is prime along with all extensions of the  $P_i$ to $S$. 
Given a prime $P = P_1$ in $R$, the sum $P+Q$ is prime in $T = R \otimes_\F S$, together with all extensions $P_i T$ and $Q T$ being prime. The first equality below holds by Theorem \ref{thm:binomialexpansion001}, and  applying Proposition \ref{prop: faithful flatness criterion 001} to the extension $\phi \colon S \hookrightarrow T$, the second equality holds by \eqref{eqn:multinomial-induction001}:
%since ring homomorphisms respect addition and multiplication, whence ideal extension along a ring homomorphism commutes with taking ideal sums and products: 
\begin{align*}
(P+Q)^{(N)} = \sum_{A_1 + B = N} P^{(A_1)} Q^{(B)} &= \sum_{A_1=0}^N P_1^{(A_1)}  \left(\sum_{A_2 + \ldots + A_n = N-A_1} \prod_{i=2}^{n} P_i^{(A_i)} \right) \\
&\subseteq \sum_{A_1 +A_2 + \ldots + A_n = N}  \prod_{i=1}^{n} P_i^{(A_i)}, 
\end{align*}
using the fact that $I (J+K) \subseteq I J + IK$ whenever $I, J , K$ are ideals in a commutative  ring. This proves the $n$-fold version of the hard inclusion in the proof of Theorem \ref{thm:binomialexpansion001}; deducing the opposite inclusion is about as easy as before, hence the above inclusion is an equality. 
%Since \textbf{all} of the iterated stipulations about extended prime ideals go through when $\F$ is algebraically closed, we close the section by stating the following natural analogue of Theorem \eqref{thm: finite tensor products 000} from the introduction; one proves the second part of the corollary via an $n$-fold version of the proof of Corollary \eqref{cor:harbhun001}.
\end{proof}

\noindent \textbf{Proving Theorem \ref{thm:main000}:} We now use the Multinomial Theorem \ref{thm:multinomial001} to deduce a corollary. Note that Theorem \ref{thm:main000} is the version of this corollary where all tensor factors are assumed to satisfy uniform symbolic topologies on primes. % sybuniform growth bound analogue of this corollary. 
%In order to lift linear bounds along tensor products $R = A \otimes_{\F} B$ of domains that are Noetherian commutative algebras over the field $\F$, it helps to be in situations where $P + Q$ is a prime ideal in $R$ for a given pair of primes $P, Q$ in $A$ and $B$ respectively. $\F$ being algebraically closed does this for us; when $A$ and $B$ are normal affine toric rings, and $P, Q$ are monomial primes, separate work of mine shows that $P+Q$ is a monomial prime in $R$ over \textbf{any} field.  

\begin{cor}\label{cor:harbhun001}
Let $\F$ be an algebraically closed field. Let $R_1, \ldots, R_n$ $(n \ge 2)$ be affine commutative $\F$-algebras which are domains. Fix $n$ primes $P_i \subseteq R_i$, and consider the expanded ideals $P_i' = P_iT$ in the affine domain $T = (\bigotimes_{\F})_{i=1}^n R_i$; set $Q = \sum_{i=1}^{n} P_i'$.  
Suppose that for each $1 \le i \le n$, there exists a  positive integer $D_i$ such that either:  
\begin{enumerate}
\item $P_i^{(D_i r) } \subseteq P_i^r$  for all $r>0$ and for all $i$; or, even stronger,  
\item $P_i^{(D_i (r-1) + 1)} \subseteq P_i^r$ for all $r>0$ and for all $i$. 
\end{enumerate}
When $(1)$ holds, $Q^{(D r) } \subseteq Q^r$ for all $r>0$, where $D  = D_1 + 
\cdots + D_n$.  When $(2)$ holds, this improves to $Q^{(D (r-1)+1) } \subseteq Q^r$ for all $r>0$, where 
$D  = \max \{D_1 , \ldots, D_n \}$. 
%Let $R$ and $S$ be affine $\F$-algebras which are domains, whence $T = R \otimes_\F S$ is as well. Let $P$ and $Q$ be prime ideals in $R$ and $S$ respectively. Finally, suppose there are positive integers $D_1 , D_2$ such that for all $r>0$, either \begin{enumerate}\item $P^{(D_1 r) } \subseteq P^r$ and $Q^{(D_2 r )} \subseteq Q^r $; or, even stronger,  \item $P^{(D_1 (r-1) + 1)} \subseteq P^r$ and $Q^{(D_2(r-1) + 1)} \subseteq Q^r $. \end{enumerate}In the first case, $(P+Q)^{(D r) } \subseteq (P+Q)^r$ for all $r>0$, where $D  = D_1 +D_2$.  In the second case,  $(P+Q)^{(D (r-1)+1) } \subseteq (P+Q)^r$ for all $r>0$, where $D  = \max \{D_1 , D_2 \}$. 
\end{cor}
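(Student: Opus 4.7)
The plan is to combine the multinomial expansion (Theorem \ref{thm:multinomial001}) with the faithful-flatness identity $P_i^{(N)} T = (P_i')^{(N)}$ (Proposition \ref{prop: faithful flatness criterion 001}), reducing the corollary to a bookkeeping exercise on the exponents in the expansion. Write $N$ for the target symbolic exponent: $N = Dr$ in Case $(1)$ and $N = D(r-1)+1$ in Case $(2)$. Theorem \ref{thm:multinomial001} gives
\[Q^{(N)} \;=\; \sum_{A_1 + \cdots + A_n = N} \prod_{i=1}^n (P_i')^{(A_i)},\]
so it suffices to show that for every composition $(A_1,\ldots,A_n)$ of $N$ one can find nonnegative integers $r_1,\ldots,r_n$ with $\sum_i r_i \ge r$ and $(P_i')^{(A_i)} \subseteq (P_i')^{r_i}$ for each $i$. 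Granted such $r_i$, the product $\prod_i (P_i')^{r_i}$ lies in $\bigl(\sum_i P_i'\bigr)^{\sum_i r_i} = Q^{\sum_i r_i} \subseteq Q^r$, and summing over all compositions yields $Q^{(N)} \subseteq Q^r$.

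To produce the $r_i$, first transfer the hypothesis on $P_i \subseteq R_i$ to $P_i' = P_i T$ via Proposition \ref{prop: faithful flatness criterion 001}; this is legal because $R_i \hookrightarrow T$ is faithfully flat and, since $\F$ is algebraically closed and every $R_j$ is a domain, extensions of primes stay prime. Hypothesis $(1)$, namely $P_i^{(D_i s)} \subseteq P_i^s$ for all $s>0$, then yields $(P_i')^{(A_i)} \subseteq (P_i')^{\lfloor A_i/D_i \rfloor}$ for every $A_i \ge 0$, while hypothesis $(2)$ yields the sharper $(P_i')^{(A_i)} \subseteq (P_i')^{\lceil A_i/D_i \rceil}$ via Lemma \ref{lem: equivalence of symb power containments 001}. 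The natural candidates are therefore $r_i = \lfloor A_i/D_i \rfloor$ in Case $(1)$ and $r_i = \lceil A_i/D_i \rceil$ in Case $(2)$.

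It remains to verify the inequality $\sum_i r_i \ge r$ in each case. Case $(2)$ is immediate: with $D = \max_i D_i$,
\[\sum_i \lceil A_i/D_i \rceil \;\ge\; \sum_i A_i/D_i \;\ge\; \frac{\sum_i A_i}{D} \;=\; \frac{D(r-1)+1}{D} \;>\; r-1,\]
so by integrality $\sum_i r_i \ge r$. Case $(1)$ is the only subtle step: the target is $\sum_i \lfloor A_i/D_i \rfloor \ge r$ whenever $\sum_i A_i = (\sum_i D_i) r$. I would prove this by induction on $n$ from the two-term inequality $\lfloor a/D \rfloor + \lfloor b/E \rfloor \ge \lfloor (a+b)/(D+E) \rfloor$; for the latter, set $q = \lfloor (a+b)/(D+E) \rfloor$ and observe that either $a \ge qD$, so the first floor already attains $q$, or else $a < qD$ and consequently $b \ge qE+1$, so the second floor does. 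Iterating yields $\sum_i \lfloor A_i/D_i \rfloor \ge \lfloor (\sum_i A_i)/(\sum_i D_i) \rfloor = r$. The main obstacle is exactly this combinatorial lemma for Case $(1)$; once it is in place, both halves of the corollary drop out uniformly from the multinomial expansion together with Proposition \ref{prop: faithful flatness criterion 001}.
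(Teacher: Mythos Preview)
Your argument is correct and follows the same global strategy as the paper (multinomial expansion via Theorem~\ref{thm:multinomial001}, then transfer via Proposition~\ref{prop: faithful flatness criterion 001} and Lemma~\ref{lem: equivalence of symb power containments 001}); the difference is in the bookkeeping for Case~(1). The paper does not distribute the power across all factors: it simply notes that if $\sum_i A_i = \sum_i D_i r$ then some single index $j$ must satisfy $A_j \ge D_j r$ (pigeonhole), whence $(P_j')^{(A_j)} \subseteq (P_j')^{r} \subseteq Q^{r}$ already, so each summand of the multinomial expansion lies in $Q^r$ because of one factor alone. Your route instead sets $r_i = \lfloor A_i/D_i \rfloor$ and proves the floor inequality $\sum_i \lfloor A_i/D_i \rfloor \ge \bigl\lfloor (\sum_i A_i)/(\sum_i D_i) \bigr\rfloor$ by induction from the two-term case; this is a genuine (and correct) lemma, and it has the virtue of unifying Cases~(1) and~(2) under a single ``choose $r_i$ with $\sum r_i \ge r$'' template, but it is more work than the paper's one-line pigeonhole. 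In Case~(2) you keep the individual $D_i$'s where the paper first replaces each $D_i$ by $D=\max_i D_i$; both versions reach $\sum_i r_i > r-1$ and hence $\ge r$, so this is a cosmetic difference.
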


\begin{proof}
%[Proof of Corollary \eqref{cor:harbhun001}]
Assume (1) holds. Per Theorem \ref{thm:multinomial001} note that for $D = D_1+ D_2 +\cdots + D_n$,  
$$Q^{(Dr)} = \sum_{A_1 + A_2 + \cdots + A_n = D_1 r + D_2 r + \cdots + D_n r} \prod_{i=1}^n (P_i')^{(A_i)}.$$
In each $n$-tuple of indices $(A_1, \ldots, A_n)$, we must have that $A_j \ge D_j r$ for some $j$,  otherwise $\sum_{i=1}^n A_i < \sum_{i=1}^n D_i r$, a contradiction. Thus each summand $\prod_{i=1}^n (P_i')^{(A_i)}$ will lie in some $(P_j')^r$ applying (1) and Proposition \ref{prop: faithful flatness criterion 001}, and hence also in $Q^r$. Since $r > 0$ was arbitrary, we win.  

If (2) holds, then $P_i^{(D (r-1) + 1)} \subseteq P_i^r$ for all $r>0$ and all $i$, where $D = \max_{1 \le i \le n}  D_i$, so  equivalently per Lemma \ref{lem: equivalence of symb power containments 001} and Proposition \ref{prop: faithful flatness criterion 001}, for all $n$-tuples $(A_1, \ldots, A_n) \in (\Z_{\ge 0})^n$, we have containments %nonnegative integers $A, B$, 
$(P_i')^{(A_i)} \subseteq (P_i')^{\lceil A_i/D \rceil} \subseteq Q^{\lceil A_i/D \rceil}.$ 
%Per Proposition \eqref{prop: faithful flatness criterion 001}, these containments hold for the extended primes $PT$ and $QT$ which we conflate with $P$ and $Q$ in a slight abuse of notation. 
 For all nonnegative integers $N$, per  Theorem \ref{thm:multinomial001}  
$$Q^{(N)} = \sum_{A_1+\cdots + A_n= N} \prod_{i=1}^n (P_i')^{(A_i)} \subseteq \sum_{A_1+\cdots + A_n = N} \prod_{i=1}^n (P_i')^{\lceil A_i/D \rceil} \subseteq \sum_{A_1+\cdots + A_n = N} \prod_{i=1}^n Q^{\lceil A_i/D \rceil}  \subseteq Q^{\lceil N/D \rceil},$$ since the integer $\sum_{i=1}^n \lceil A_i/D \rceil  \ge \lceil (\sum_{i=1}^n A_i)/D \rceil = \lceil N/D \rceil$ for all $n$-tuples $(A_1, \ldots, A_n) \in (\Z_{\ge 0})^n$  with $\sum_{i=1}^n A_i = N$. Thus equivalently, $Q^{(D (r-1)+1) } \subseteq Q^r$ for all $r>0$ by Lemma \ref{lem: equivalence of symb power containments 001}.
\end{proof}

\begin{rem}\label{rem:an-alternative-containment}
We get a much stronger conclusion in Corollary \ref{cor:harbhun001} when (2) holds. This is because we can then give a proof using Lemma \ref{lem: equivalence of symb power containments 001} as a workaround. It is less clear what the strongest conclusion to shoot for is when (1) holds.  We note that if (1) holds under Corollary \ref{cor:harbhun001}, then setting $D = \max D_i$, one can alternatively prove by contradiction that $$Q^{(n(Dr-1) + 1)} \subseteq Q^r  \mbox{ for all } r>0.$$ In part (1) of the proof above, simply adjust the claim ``$A_j \ge D_j r$ for some $j$" to ``$A_j \ge D r$ for some $j$." Otherwise, some tuple satisfies $n(Dr-1) +1 = \sum_{i=1}^n A_i \le n (Dr-1)$, a contradiction.   
\end{rem}

\begin{rem}\label{rem:infinite-spectrum}
When all hypotheses are satisfied, Corollary \ref{cor:harbhun001} typically applies to an infinite set of prime ideals in the tensor product $T$. If $R$ is a Noetherian ring of dimension at least two, or a Noetherian ring of dimension one which has infinitely many maximal ideals, then $\operatorname{Spec}(R)$ is infinite; see %: indeed, any height two prime ideal can be expressed as a union of height one primes, but not of finitely many
\cite[Exercises~21.11-21.12]{Alt-Klei13}. %To see this dimension bound is sharp, note that $\# \operatorname{Spec}(R) = 2$ when $R$ is any discrete valuation ring, hence one dimensional. 
%; see also \cite[Exer~21.12]{Alt-Klei13}. 
Now suppose $R_1, \ldots, R_n$ are $\F$-affine domains, with $n \ge 2$ and $\F$ algebraically closed, at least one of which is of dimension one or more.  Then in the domain $T = (\bigotimes_{\F})_{i=1}^n R_i$,  %for every $1 \le h \le n$, there are infinitely-many primes in $T$ of height $h$. Indeed, 
the following set 
$\mathcal{Q}_{ED} (T)  := \{Q = \sum_{i=1}^n P_i T \in \operatorname{Spec}(T) \colon \forall 1 \le i \le n, P_i \in \operatorname{Spec}(R_i)\}$ is infinite.    
\end{rem}

\begin{rem}\label{rem:Bocci-Ha-etal}
%On the more affirmative side, in arbitrary characteristic \eqref{Harbourne-Huneke bound 001} holds for all monomial ideals in $S$. We will make a more precise statement below, but first let
Let $R = \F [x_1, \ldots, x_m]$, $S = \F [y_1, \ldots, y_n]$, and $T = R \otimes_\F S \cong \F [x_1, \ldots, x_m, y_1, \ldots, y_n]$ be polynomial rings over a field $\F$. Our original inspiration for Theorem \ref{thm:multinomial001} was the following 
% In subsection 7.3 of (\cite{BCGHJNSVTV-000}), the authors prove the following binomial expansion result: 
\begin{theorem*}[Thm. 7.8 of Bocci et al \cite{BCGHJNSVTV-000}]
Let $I \subseteq R$ and $J \subseteq S$ be squarefree monomial ideals. Let $I' = IT$ and $J' = JT$ be their expansions to $T$. Then for any $N \ge 0$, the symbolic power  
\begin{equation}\label{eqn:binomialexpansion01}
(I' + J')^{(N)} = \sum_{i=0}^{N} (I')^{(N-i)} (J')^{(i)} = \sum_{A+B=N} (I')^{(A)} (J')^{(B)}.
\end{equation}
\end{theorem*}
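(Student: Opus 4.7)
The plan is to exploit the fact that both sides of the claimed equality are monomial ideals in the polynomial ring $T$ --- the symbolic power on the left because symbolic powers of monomial ideals are monomial, and the sum on the right because it is built from monomial ideals. Hence the proof reduces to the combinatorial task of checking, for an arbitrary monomial $x^a y^b \in T$ (with $x^a = x_1^{a_1}\cdots x_m^{a_m}$ and $y^b = y_1^{b_1}\cdots y_n^{b_n}$), that it lies in the LHS if and only if it lies in the RHS.

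For a squarefree monomial ideal $I \subseteq R$, the minimal primes $P$ of $I$ are monomial primes (each generated by a subset of the variables), there are no embedded primes, and $I^{(N)} = \bigcap_P P^N$; moreover, since such a $P$ is generated by a regular sequence of variables, we have $P^N = P^{(N)}$. For a monomial $x^a$, one has $x^a \in P^N$ iff $\deg_P(x^a) := \sum_{x_i \in P} a_i \ge N$, so $x^a \in I^{(N)}$ iff $\alpha_a := \min_P \deg_P(x^a) \ge N$. Analogously, define $\beta_b := \min_Q \deg_Q(y^b)$ over the minimal primes $Q$ of $J$.

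The next step is to identify the minimal primes of $I' + J'$ in $T$. Because $I$ and $J$ involve disjoint sets of variables and, for each pair $(P, Q)$ of minimal primes, $T/(P+Q) \cong R/P \otimes_\F S/Q$ is a domain, one checks that $\mbox{Min}(T/(I'+J'))$ equals $\{P + Q \colon P \in \mbox{Min}(R/I),\, Q \in \mbox{Min}(S/J)\}$, and that each $P + Q$ is again a monomial prime in $T$. Thus $x^a y^b \in (I' + J')^{(N)} = \bigcap_{P,Q} (P+Q)^N$ iff $\deg_P(x^a) + \deg_Q(y^b) \ge N$ for every such pair, and because $P, Q$ vary independently this is equivalent to $\alpha_a + \beta_b \ge N$. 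On the other hand, by flat extension $(I')^{(A)} = I^{(A)} T$ and $(J')^{(B)} = J^{(B)} T$, so each product $(I')^{(A)} (J')^{(B)}$ is the monomial ideal of $T$ spanned by monomials $x^{a'} y^{b'}$ with $x^{a'} \in I^{(A)}$ and $y^{b'} \in J^{(B)}$. Hence $x^a y^b$ lies in $\sum_{A+B=N} (I')^{(A)} (J')^{(B)}$ iff there exist nonnegative $A, B$ with $A + B = N$, $\alpha_a \ge A$, and $\beta_b \ge B$ --- again equivalent to $\alpha_a + \beta_b \ge N$. Comparing the two characterizations yields the equality.

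The only subtle step is the identification of $\mbox{Min}(T/(I' + J'))$ with $\{P + Q\}$, which uses the disjoint-variables structure essentially; everything else is straightforward monomial-ideal bookkeeping. It is this disjoint-variables hypothesis, together with squarefreeness (which keeps $\mbox{Ass}(R/I) = \mbox{Min}(R/I)$ and avoids embedded-component pathologies), that makes the clean binomial formula available in this generality, and would have to be handled more carefully to push the result further.
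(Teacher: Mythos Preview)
Your argument is correct. Note, however, that the paper does not supply its own proof of this statement: it is quoted from Bocci et al.\ in Remark~\ref{rem:Bocci-Ha-etal} purely as motivation for the paper's binomial theorem for \emph{prime} ideals (Theorem~\ref{thm:binomialexpansion001}). So there is no direct ``paper's proof'' to compare against here.

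That said, the comparison with the paper's method for its prime-ideal analogue is instructive. Your approach is purely combinatorial: reduce to checking monomial membership, write $I^{(N)}=\bigcap_P P^N$ over the monomial minimal primes, identify $\operatorname{Min}(I'+J')=\{P+Q\}$ via the disjoint-variables structure, and match both sides through the single integer $\alpha_a+\beta_b$. This is short and transparent but leans entirely on monomial-ideal machinery---explicit primary decomposition, symbolic powers as intersections of ordinary powers of variable-generated primes. The paper's proof of Theorem~\ref{thm:binomialexpansion001} is structural rather than combinatorial: it shows the candidate sum $J_n=\sum_{a+b=n}P^{(a)}Q^{(b)}$ is $(P{+}Q)$-primary by filtering $T/J_n$ through the successive quotients $J_{c-1}/J_c$, identifying each with $\bigoplus_{a+b=c-1}(P^{(a)}/P^{(a+1)})\otimes_\F(Q^{(b)}/Q^{(b+1)})$ as $\F$-vector spaces, and then invoking torsion-freeness over $T/(P+Q)$ (Proposition~\ref{cor:torsionfree-tensorproduct 001} and Lemma~\ref{lem: characterization-torsionfreeness 00}) to force $P+Q$ to be the only associated prime. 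That argument never touches generators or monomials, which is exactly why it extends to arbitrary primes in affine domains over an algebraically closed field---and, via \cite{HNTTBinomial}, further still---whereas your argument is confined to the squarefree monomial setting.
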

\noindent Ha, Nguyen, Trung, and Trung \cite[Thm. 3.4]{HNTTBinomial} recently extended the above theorem to the case of two nonzero ideals $I \subseteq R, J \subseteq S$ in two Noetherian commutative $\F$-algebras such that $T = R \otimes_\F S$ is also Noetherian. A general multinomial theorem then follows  by adapting the proof of Theorem \ref{thm:multinomial001}, where one containment would require the  $n$-fold version of \cite[Lem. 2.1(i)]{HNTTBinomial}. 
Combining this multinomial expansion with the general versions of Lemma \ref{lem: equivalence of symb power containments 001} and Proposition \ref{prop: faithful flatness criterion 001} we proved in \cite[Prop. 2.1, Lem. 3.3, Prop. 3.4]{Walker002}, one can extend Corollary \ref{cor:harbhun001} to a form allowing, for instance, any proper ideals $I_i \subseteq R_i$. %, though we must still assume the $R_i$ are affine domains over an algebraically closed field $\F$, in deference to the $\C \otimes_{\RR} \C$ example from earlier.  %Leveraging this multinomial expansion, together with the general versions of Lemma \ref{lem: equivalence of symb power containments 001} and Proposition  \ref{prop: faithful flatness criterion 001} proved in \cite[Prop 2.1, Lem 3.3, Prop 3.4]{Walker002}, one can extend Corollary \ref{cor:harbhun001} to allow any proper ideals $I_i \subseteq R_i$, though we must still assume the $R_i$ are affine domains over an algebraically closed field $\F$, in deference to the $\C \otimes_{\RR} \C$ example from earlier. 
As a final note in passing, the proof of \cite[Prop. 2.1]{Walker002} still works up to a tweak of multiplicative system, for those who opt to define symbolic powers of proper ideals using only minimal associated primes as in \cite{HNTTBinomial}, rather than  using all associated primes as in \cite{Walker002}.    
%It is known that if $I$ is any monomial ideal in an affine polynomial ring $A = \F [z_1, \ldots, z_a]$, and $E = \mbox{big-height}(I)$ is the maximal height of an associated prime of $I$, then $I^{(E(r-1)+1)} \subseteq I^r$ for all $r>0$ (Example 8.4.5 of \cite{Primer}). By adapting part of the proof for Corollary \eqref{cor:harbhun001} below, the binomial expansion \eqref{eqn:binomialexpansion01} then allows us to show that $(I'+J')^{(D(r-1) + 1)} \subseteq (I'+J')^r$ for all $r>0$ where $$D = \max (\mbox{big-height}(I'),\mbox{big-height}(J')) = \max (\mbox{big-height}(I),\mbox{big-height}(J)).$$
\end{rem}

\section{Finale: Sample Applications to Tensor Power Domains}\label{section: Finale 1}

We begin with two results on uniform linear bounds on asymptotic growth of symbolic powers for equicharacteristic Noetherian domains that have nice structure, but need not be regular. The first is due to Huneke, Katz, and Validashti, while the second is due to  Ajinkya A. More. %; each is proven using Huneke's uniform Artin-Rees lemma (\cite{hun1}), among a panoply of other tools. 

\begin{theorem}[{\cite[Cor.~3.10]{HKV}}]\label{thm:HKVCor3.10}
\textit{Let $R$ be an equicharacteristic Noetherian local domain such that $R$ is an isolated singularity. Assume that $R$ is either essentially of finite type over a field of characteristic zero or $R$ has positive characteristic, is $F$-finite and analytically irreducible. Then there exists an $E \ge 1$ such that $P^{(E r)} \subseteq P^r$ for all $r>0$ and all prime ideals $P$ in $R$.}  
\end{theorem}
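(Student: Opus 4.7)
The plan is to separate the maximal ideal from the non-maximal primes, apply Ein--Lazarsfeld--Smith (or Hochster--Huneke in positive characteristic) locally at each non-maximal prime, and then transfer the resulting local containment back to $R$ via a uniform Artin--Rees--type bound supplied by the two alternative hypotheses on $R$.

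First, for the maximal ideal $\mathfrak{m}$: in a local ring every power $\mathfrak{m}^n$ is $\mathfrak{m}$-primary, so $\mathfrak{m}^{(n)} = \mathfrak{m}^n$ and the desired containment holds trivially with exponent $1$. For any non-maximal prime $P$, the isolated-singularity hypothesis makes $R_P$ a regular local ring of dimension $h = \mathrm{ht}(P) \leq d - 1$, where $d = \dim R$. Applying Ein--Lazarsfeld--Smith (char $0$) or Hochster--Huneke (char $p$) gives $(PR_P)^{((d-1)r)} \subseteq (PR_P)^r$ for all $r > 0$, and intersecting with $R$ yields
$$ P^{((d-1)r)} \subseteq P^r R_P \cap R = P^{(r)}. $$

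The main obstacle is that the previous display is a containment entirely within the symbolic chain, not yet inside $P^r$ itself. To close the gap I would invoke Huneke's uniform Artin--Rees theorem, which is available under either blanket hypothesis (essentially of finite type over a characteristic-zero field, or $F$-finite and analytically irreducible in characteristic $p$): it provides a constant $k = k(R)$ such that $I^n \cap (u) \subseteq u \, I^{n-k}$ for every ideal $I$, every element $u$, and every $n \geq k$. Given $x \in P^{(n)}$ together with a witness $u \notin P$ satisfying $ux \in P^n$, this produces $y \in P^{n-k}$ with $ux = uy$, and the domain property of $R$ forces $x = y \in P^{n-k}$. Thus $P^{(n)} \subseteq P^{n-k}$ uniformly in $P$ for all $n \geq k$. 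Chaining this with the ELS/HH display above and absorbing the additive shift $k$ into a multiplicative constant produces a uniform $E$ with $P^{(Er)} \subseteq P^r$ for every non-maximal prime $P$; together with the trivial case $P = \mathfrak{m}$, this yields the desired uniform symbolic topology bound for every prime of $R$.
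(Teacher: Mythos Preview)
The paper does not supply its own proof of this statement; it is quoted as Corollary~3.10 of Huneke--Katz--Validashti and invoked only as background in the final section. So there is no in-paper argument to compare against, and your sketch has to stand on its own.

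There is a genuine gap at the Artin--Rees step. Huneke's uniform Artin--Rees theorem fixes a \emph{single} pair of finitely generated modules $N \subseteq M$ and returns a constant $k = k(N,M)$ valid for all ideals $I$; it does not promise a $k$ that is simultaneously uniform over all principal submodules $(u) \subseteq R$. In your argument the witness $u$ depends on the prime $P$ (and a priori on $n$ and on the individual element $x \in P^{(n)}$), so the $k$ you extract varies with $P$ and uniformity is lost. Worse, the doubly-uniform statement you invoke, namely $(I^n : u) \subseteq I^{n-k}$ for every ideal $I$, every nonzero $u$, and every $n \ge k$, already implies $P^{(n)} \subseteq P^{n-k}$ for every prime $P$ with one fixed $k$; that is essentially the theorem itself in additive form, and it would render your step~2 (the local ELS/HH input) superfluous. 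So step~3 as written is circular.

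For comparison, the actual argument in \cite{HKV} replaces the floating witness $u$ by a \emph{fixed} nonzero multiplier $c$, manufactured from a power of the Jacobian ideal, which is $\mathfrak{m}$-primary precisely because $R$ is an isolated singularity. One obtains a relation of the shape $c\,P^{(hn)} \subseteq P^n$ for all non-maximal $P$ and all $n$, with $h$ depending only on $\dim R$; the ELS/HH bound on the regular punctured spectrum is one ingredient here. A further bootstrapping argument developed earlier in \cite{HKV} then eliminates $c$ and yields the clean containment $P^{(Er)} \subseteq P^r$. Your steps~1 and~2 line up with this in spirit; the divergence is entirely in how one descends from $P^{(hr)} \subseteq P^{(r)}$ to an honest ordinary power uniformly in $P$.
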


\begin{theorem}[{\cite[Thm.~4.4, Cor.~4.5]{AAMore13}}, see also {\cite[Thm.~3.25]{5authorSymbolicSurvey}}]\label{thm:AAMore13} \textit{Suppose $R \subseteq S$ is a finite extension of equicharacteristic normal domains such that: $(1)$ $S$ is a regular ring generated as an $R$-module by $n$ elements, and $n!$ is invertible in $S$; and
$(2)$  $R$ is either essentially of finite type over an excellent Noetherian local ring (or over $\Z$), or is characteristic $p>0$ and $F$-finite.  
Then there exists an $E \ge 1$ such that $P^{(E r)} \subseteq P^r$ for all $r>0$ and all prime ideals $P$ in $R$.}  
\end{theorem}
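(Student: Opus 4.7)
My plan is to transfer the Ein-Lazarsfeld-Smith / Hochster-Huneke containment from the regular overring $S$ down to $R$ via a trace splitting. Since $S$ is an equicharacteristic regular ring (and excellent or $F$-finite by hypothesis $(2)$), the extended ELS theorem yields a uniform $d = \dim R = \dim S$ such that $\mathfrak{b}^{(dm)} \subseteq \mathfrak{b}^m$ holds for every ideal $\mathfrak{b} \subseteq S$ and every $m \ge 1$, using $\mathrm{bight}(\mathfrak{b}) \le d$.

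Next I would produce an $R$-linear retraction $\sigma \colon S \to R$ of the inclusion $R \hookrightarrow S$. Let $m \le n$ denote the generic rank of $S$ over $R$; since $n!$ is invertible in $S$ and $m$ divides $n!$, the rank $m$ is also a unit in $S$. Because $\mathrm{char}(R) \nmid m$, the extension $K(S)/K(R)$ is separable, so the field trace form is nondegenerate; normality of $R$ forces $\mathrm{Tr}_{K(S)/K(R)}(S) \subseteq R$, and $\sigma := \tfrac{1}{m}\mathrm{Tr}$ is the identity on $R$. Thus $R$ is a pure $R$-subring of $S$, giving $IS \cap R = I$ for every ideal $I \subseteq R$. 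For any prime $P \subseteq R$, I would then verify the symbolic comparison $P^{(a)} S \subseteq (PS)^{(a)}$ directly: if $f \in P^{(a)}$ with $uf \in P^a$ and $u \in R \setminus P$, then going-up and incomparability for the finite integral extension $R \subseteq S$ ensure every minimal prime $Q$ of $PS$ contracts to $P$, so $u \notin Q$ and $f \in (PS)^a S_Q$ at every such $Q$. Assembling these pieces with $E = d$,
$$P^{(Er)} S \;\subseteq\; (PS)^{(Er)} \;\subseteq\; (PS)^r \;=\; P^r S,$$
and intersecting with $R$ via purity delivers $P^{(Er)} \subseteq P^r S \cap R = P^r$, as desired.

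The main obstacle is the middle inclusion in the display: applying ELS to the generally non-radical ideal $PS$ requires the Hochster-Huneke extension of ELS to arbitrary ideals, with the symbolic-power convention matching the minimal-primes definition used in the localization step above. Hypothesis $(2)$ enters here, ensuring that $S$ is excellent (or $F$-finite), so ELS applies in full generality in positive characteristic. Normality of both rings plays a dual role: it makes the trace splitting land inside $R$, and it controls the minimal primes of $PS$ via going-down. Uniformity of $E$ across all primes $P$ is automatic once we take $E = d$, an invariant of $R$ alone.
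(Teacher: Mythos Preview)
The paper does not prove this theorem; it is quoted verbatim from More \cite{AAMore13} (see also \cite{5authorSymbolicSurvey}) and used as a black box in Section~\ref{section: Finale 1}. There is therefore no in-paper argument to compare your proposal against.

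On the merits of your outline: the trace-splitting and purity steps are fine, and the inclusion $P^{(a)}S \subseteq (PS)^{(a)}$ is correct under the minimal-primes convention you invoke. The genuine gap is precisely where you locate it, and it is not merely a matter of citing the right version of Hochster--Huneke. The ELS/HH containment $(PS)^{(dr)}\subseteq (PS)^r$ for an arbitrary (non-radical) ideal is proved for the \emph{all-associated-primes} symbolic power; under that convention your preceding inclusion $P^{(a)}S\subseteq (PS)^{(a)}$ can fail, because a witness $u\in R\setminus P$ may lie in an embedded prime of $PS$. Conversely, under the minimal-primes convention that makes $P^{(a)}S\subseteq (PS)^{(a)}$ work, you cannot invoke ELS/HH directly on $PS$, and passing to $\sqrt{PS}$ destroys the right-hand side $(PS)^r$ you need. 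The two conventions do not match up in the middle, and this mismatch is exactly why one cannot simply take $E=\dim R$.

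More's actual argument does not attempt this direct transfer. It runs through the Huneke--Katz--Validashti framework (uniform Izumi/Chevalley-type estimates combined with the splitting), and the resulting $E$ depends on auxiliary data beyond $\dim R$; hypothesis~(2) is there to guarantee those uniform estimates exist, not merely to make $S$ excellent for ELS. Your sketch would need a substantially different bridge at the middle inclusion to become a proof.
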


\begin{rem}\label{rem:AG-isolated-sings} Suppose that $R$ is the coordinate ring of an affine variety over any perfect field $\F$, whose singular locus is zero dimensional. Then in tandem with the results of Ein-Lazarsfeld-Smith and Hochster-Huneke \cite{ELS,HH1},  Theorem \ref{thm:HKVCor3.10} would yield a uniform slope $E$ for all primes in $R$. In particular, this covers 
%Next, if we work over $\C$ or any algebraically closed field of characteristic zero, then
$\F[x,y,z]/(y^2-xz)$, 
$ \F[x,y,z,w]/(xy-zw)$, and more generally when $R$ corresponds to the affine cone over any smooth projective variety. %any domain which is isomorphic as an $\F$-algebra to a Segre product of Veronese rings. 
%Meanwhile, if $S = \C[x_1, \ldots, x_n]$ is a polynomial ring, then Theorem \ref{thm:AAMore13} applies when $R$ is any Veronese subring of $S$ (i.e., the subalgebra generated by all monomials of a fixed degree in the $x_i$). %Incidentally, all of these concrete examples are normal toric rings.   
\end{rem}

\noindent Indeed, the class of rings $R$ to which  Theorems \ref{thm:HKVCor3.10}-\ref{thm:AAMore13} apply is large. Applying Theorem \ref{thm:main000} to any collection of two or more rings under Remark \ref{rem:AG-isolated-sings}, Remark \ref{rem:infinite-spectrum} says that we can create an infinite set as a vantage point for data suggestive of uniform symbolic topologies in the corresponding tensor product domain. At present, since the domain we create has non-isolated singularities, there is no theorem in the literature affirming that the domain has uniform symbolic topologies on all primes. We illustrate how these matters occur together in an example below. 

But first, we fix an algebraically closed field $\F$. If $R$ is an $\F$-affine domain, %i.e., finitely generated as an $\F$-algebra, 
we use the tensor power notation $T = R^{\otimes N} = (\bigotimes_\F)_{i=1}^N R_i$ to denote the $\F$-affine domain obtained by tensoring together $N$ copies of $R$ over $\F$, where $R_i$ and $R_j$ are presented as quotients of polynomial rings in disjoint sets of variables when $i<j$.  
%Using the notation of Proposition \eqref{prop: faithful flatness criterion 001}, we 
From Remark \ref{rem:infinite-spectrum}, we recall the following set of prime ideals   $\mathcal{Q}_{ED}(T) := \{Q = \sum_{i=1}^N P_i T \in \mbox{Spec}(T) \colon \mbox{ each }P_i \in  \operatorname{Spec}(R_i)\}$. %to which Corollary \ref{cor:harbhun001} applies. 
% that decompose as the sum of finitely many extended prime ideals, calling them extension-decomposable (ED) prime ideals. 

%\begin{exm} Consider $A = \C[x,y,z,w]/(xy-zw)$ and $$R = A^{\otimes N}= \frac{\C[x_{i, 0}, x_{i, 1} ,x_{i, 2}, x_{i, 3} \colon 1 \le i \le N]}{(x_{i, 0} x_{i, 1} - x_{i, 2} x_{i, 3} \colon 1 \le i \le N)}.$$ $A$ is an affine hypersurface ring in characteristic zero, with isolated singularity corresponding to the origin in $\C^4$. Thus  there is a positive integer $E$ such that $P^{(Er)} \subseteq P^r$ for all $r>0$ and all prime ideals $P$ in $A$ (cf., Corollary 3.10 of Huneke-Katz-Validashti (\cite{HKV})). It follows that $Q^{(n E \cdot r)} \subseteq Q^r$ for all $r>0$ and all prime ideals $Q \in \mathcal{Q}_{ED}(R)$. Meanwhile the singular locus of the codimension-$N$ affine variety $V \subseteq \C^{4N}$ corresponding to $R$ always has codimension 3 in $V$: indeed, by inspecting the codimension-$N$ minors of the Jacobian matrix for $V$, one can check that $\operatorname{Sing}(V) =  \bigcup_{i=1}^N [V \cap \operatorname{Zeros}(x_{i, 0}, x_{i, 1} ,x_{i, 2}, x_{i, 3})]$ is equidimensional, and each intersection is an irreducible $(3N-3)$-dimensional variety. In particular, although $R$ is not an isolated singularity for $N \ge 2$, there are still uniform linear bounds lurking for the asymptotic growth of symbolic powers. \end{exm}

\begin{exm}\label{ex:hypersurfaceUSTP}
\textit{%The rings $\C[x,y,z]/(y^2-xz)$ and $ \C[x,y,z,w]/(xy-zw)$ above fall under a more general setup: to start, 
To start, we fix an algebraically closed field $\F$. Given integers $a$ and $d$ both at least two, 
consider an affine hypersurface domain $R = \F[z_1,\ldots,z_a]/(F_d(\bar{z}))$ where $F_d$ is an irreducible homogeneous polynomial of degree $d$, with isolated singularity at the origin. 
%and the origin is the only singular point of the hypersurface $V_A = \operatorname{Zeros}(F_d) \subseteq \C^{a}$.  
%consists only of the origin. can be cut out by the ideal $(F_d , z_1^{e_1}, \ldots, z_a^{e_a})$, or equivalently by its radical $(F_d , z_1, \ldots, z_a)$ which is prime. 
Consider the varieties  $V_R = \operatorname{Spec}(R) \subseteq \F^a$ and $V = \operatorname{Spec}(T) \subseteq \F^{aN}$ where $$T = R^{\otimes N}= \frac{\F[z_{i, 1},\ldots, z_{i, a} \colon 1 \le i \le N]}{(F_d(z_{i, 1},\ldots, z_{i, a}) \colon 1 \le i \le N)}.$$
Per Remark \ref{rem:AG-isolated-sings}, Theorem \ref{thm:main000} implies that %  Theorem \ref{thm:HKVCor3.10},   there is a positive integer $E$ such that $P^{(Er)} \subseteq P^r$ for all $r>0$ and all primes $P$ in $A$, so Corollary \ref{cor:harbhun001} says that 
$Q^{(N E \cdot r)} \subseteq Q^r$ for all $r>0$ and all primes $Q \in \mathcal{Q}_{ED}(T)$. Meanwhile, in terms of $n$-factor Cartesian products, the singular locus $$ \operatorname{Sing}(V) = (\{0\}  \times V_R \times \cdots \times V_R) \cup (V_R \times \{0\} \times V_R \times \cdots \times V_R) \cup \cdots \cup (V_R \times \cdots \times V_R \times \{0\})$$
is equidimensional of dimension $(a-1)(N-1)$. %of the codimension-$N$ affine variety $V \subseteq \C^{aN}$ corresponding to $R$ always has codimension $a-1$ in $V$: indeed, 
%by inspecting the codimension-$N$ minors of the Jacobian matrix for $V$, one can check that 
%$\operatorname{Sing}(V) =  \bigcup_{i=1}^N [V \cap \operatorname{Zeros}(z_{i, 1}, \ldots , z_{i, a})]$ is equidimensional, and each intersection is an irreducible $((a-1)(N-1))$-dimensional variety corresponding to a copy of $A^{\otimes (N-1)}$.  
  In particular, while $T$ is not an isolated singularity when $N \ge 2$, the set $\mathcal{Q}_{ED}(T)$ is infinite by Remark \ref{rem:infinite-spectrum} and provides a vantage point for witnessing uniform linear bounds lurking for the asymptotic growth of symbolic powers of primes in $T$.} 
\end{exm}

%As a variant of this example, given an integer $D \ge 2$, one starts with $A = V_D$ being the $D$-th Veronese subring of an affine polynomial ring over $\C$. Theorem \ref{thm:AAMore13} implies that a uniform $E$ exists for prime ideals in $A$, while  (\cite{Walker002}, Thm 4.3)--for the monomial prime ideals of $A$--yields a tight lower bound on $E$, namely that $E \ge D$ when $\dim (A) \ge 2$. This tight bound would carry over to all the rings $R = A^{\otimes N}$ as well per Proposition \ref{prop: faithful flatness criterion 001}. As a separate variant of Example \ref{ex:hypersurfaceUSTP}, in \cite{Walker002} we show that if a normal toric ring $A$ corresponds to a \textbf{simplicial} affine toric variety over an algebraically closed field $\F$, i.e., its divisor class group is finite, then the group order $D$ works for case (2) in Corollary \ref{cor:harbhun001},  for all primes $Q = \sum_{i=1}^N P_i'  \in \mathcal{Q}_{ED}(A^{\otimes N})$, where each prime $P = P_i$ is either \textit{normally torsion free} (i.e., $P^{(r)} = P^r$ for all $r>0$), or  
%of height at most $N$ that are a sum of extensions of primes
%of \textit{height one} with $P^{(r)} \neq  P^r$ for some $r>1$. This covers all $Q \in \mathcal{Q}_{ED}(A^{\otimes N})$ when $A$ is any two-dimensional normal toric ring.

\begin{rem}
For pointers to results where an explicit value $E \ge 1$ as in Theorems \ref{thm:HKVCor3.10}-\ref{thm:AAMore13} is given for particular examples of domains $R$ under Remark \ref{rem:AG-isolated-sings}, we invite the reader to see the recent survey paper \cite[Thm.~3.29, Cor.~3.30]{5authorSymbolicSurvey}, along with the main results featured in the introductions to our papers \cite{Walker001,Walker002,Walker004}. Section 3 or 4 in each of the latter papers typically includes remarks about when the designated value $E$ can be considered optimal. 
\end{rem}

\noindent \textbf{Closing Remarks.}  
Launching from Theorem \ref{thm: finite tensor products 000} in the introduction, we have deduced a more powerful criterion for proliferating uniform linear bounds on the growth of symbolic powers of prime ideals (e.g., Harbourne-Huneke bounds)--Theorem \ref{thm:main000}.  
In the setting of domains of finite type over algebraically closed fields, this criterion contributes further evidence for Huneke's philosophy in \cite{hun1} about uniform bounds lurking throughout commutative algebra. %; it covers a reasonably \textbf{\textit{prodigious}} class of examples involving domains of finite type over a field we assume is algebraically closed, both to assuage valid concerns and to streamline arguments.  
We close with a goalpost question that exceeds our grasp at present: Given the role of tensor products in our manuscript, do analogues of the above criteria hold for other product constructions in commutative algebra, such as Segre products of $\N$-graded rings, or fiber products of toric rings?

\end{document}